\newtheorem{theorem}{Theorem}[section]
\newtheorem{corollary}[theorem]{Corollary}
\newtheorem{lemma}[theorem]{Lemma}
\newtheorem{proposition}[theorem]{Proposition}
\newtheorem{prop-def}[theorem]{Proposition-Definition}
\theoremstyle{definition}
\newtheorem{definition}[theorem]{Definition}
\newtheorem{example}[theorem]{Example}
\numberwithin{equation}{section}
\def\deg{{\rm deg}}
\def\1{\mathbbold{1}}
\def\fm{\mathfrak m}
\def\GKdim{{\rm GKdim}\,}
\begin{document}
\title[Graded Poisson Hopf algebras]
{Unimodular graded Poisson Hopf algebras}

\author{K.A. Brown}
\address{School of Mathematics and Statistics\\
University of Glasgow\\ Glasgow G12 8QW\\
Scotland}
\email{ken.brown@glasgow.ac.uk}

\author{J.J. Zhang}
\address{Department of Mathematics\\
Box 354350\\
University of Washington\\
Seattle, WA 98195, USA}
\email{zhang@math.washington.edu}

\subjclass[2010]{17B63;16T05;16E65;53D17}

\keywords{Poisson algebra; Hopf algebra; unimodularity}

\begin{abstract}
Let $A$ be a Poisson Hopf algebra over an algebraically closed 
field of characteristic zero. If $A$ is
finitely generated and connected graded as an algebra
and its Poisson bracket is homogeneous of degree
$d\geq 0$, then $A$ is unimodular; that is, the modular
derivation of $A$ is zero. This is a Poisson analogue of a 
recent result concerning Hopf algebras which are connected 
graded as algebras.
\end{abstract}

\maketitle

\dedicatory{}%
\commby{}%

\setcounter{section}{-1}
\section{Introduction}
\label{zzsec0}

\bigskip

Poisson algebras have lately been playing an important role in algebra,
geometry, mathematical physics and other subjects. 
For example,  Poisson structures have been used in the study of discriminants,
in the work of Nguyen-Trampel-Yakimov \cite{NTY} and 
Levitt-Yakimov \cite{LY}, and in the representation theory of 
Sklyanin algebras of global dimension three, in the work of 
Walton-Wang-Yakimov \cite{WWY}. Restricted Poisson Hopf algebras were introduced
in \cite{BK}, and further investigated in \cite{BYZ}. 

Throughout, $\Bbbk$ will denote an algebraically 
closed field of characteristic zero; all algebras considered 
in what follows are $\Bbbk$-algebras, and all unadorned 
tensor products are over $\Bbbk$.

We are concerned here with the unimodularity of a Poisson Hopf algebra 
$A$ which is an affine connected graded algebra. Here, in the light of 
the smoothness which holds in our characteristic zero setting, $A$ is a 
polynomial algebra in finitely many variables. Since Poisson Hopf algebras
were introduced by Drinfeld \cite{Dr1} in 1985, (see Definition \ref{zzdef1.1}), they have
been intensively studied in connection with homological algebra and
deformation quantization; see, for example, recent work in \cite{LWZ1, LWZ2, 
LWW, LW}. A Poisson algebra is said to be \emph{unimodular} if the class of its 
modular derivation in a certain factor of the first Poisson cohomology group is 
trivial; further details and references are given in section 3.1. The purpose 
of this paper is to provide further evidence that unimodularity for 
commutative Poisson algebrs is an analogue of the Calabi-Yau property of 
a noncommutative algebra, reinforcing results in this direction in 
\cite{Do, LWW, LWZ2}. Thus, just as is the case for the Calabi-Yau property in a
noncommutative setting, unimodularity can be viewed as a homological
property of Poisson algebras.

A result of \cite{BGZ} states that a Hopf $\Bbbk$-algebra of  finite 
Gelfand-Kirillov dimension which is 
connected graded as an algebra  is Calabi-Yau. One might 
suspect that there should be a Poisson version of this 
result, and indeed our main result is
the following theorem, whose proof uses this noncommutative result from \cite{BGZ}, 
applied to the Poisson enveloping algebra of a graded Poisson Hopf algebra.

\begin{theorem}
\label{zzthm0.1}
Let $A$ be a Poisson Hopf $\Bbbk-$algebra. Suppose
that
\begin{enumerate}
\item[(i)]
as an algebra, $A$ is finitely generated and connected graded; and that
\item[(ii)]
the Poisson bracket $\{-,-\}$ is homogeneous of degree $d\geq 0$.
\end{enumerate}
Then $A$ is unimodular; that is, the modular derivation of $A$ is zero.
\end{theorem}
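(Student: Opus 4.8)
The plan is to move the question out of Poisson geometry and into noncommutative homological algebra by passing to the Poisson enveloping algebra, to apply the Calabi--Yau theorem of \cite{BGZ} there, and then to translate the conclusion back. Write $\U(A)$ for the Poisson universal enveloping algebra of $A$, the associative $\K$-algebra whose left modules are exactly the Poisson $A$-modules. The guiding analogy is that the modular derivation of $A$ is the Poisson counterpart of the Nakayama automorphism of $\U(A)$: if $\U(A)$ can be shown to be Calabi--Yau, its Nakayama automorphism is trivial, and this should force the modular derivation of $A$ to vanish. Everything therefore reduces to checking that $\U(A)$ is eligible for \cite{BGZ} and to making the final translation precise.

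First I would verify that $\U(A)$ is a connected graded Hopf algebra of finite Gelfand--Kirillov dimension. The Hopf structure on $A$ induces one on $\U(A)$, functorially from that of $A$, and this is the only place the Hopf hypothesis enters. To grade $\U(A)$, I would assign to the multiplication operator by $a\in A_i$ the degree $i$, and to the Hamiltonian operator $\{a,-\}$ the degree $i+d$; because the bracket is homogeneous of degree $d$, all defining relations of $\U(A)$ become homogeneous (one checks this on $[m_a,m_b]=0$, $[\,\{a,-\},m_b]=m_{\{a,b\}}$ and $[\,\{a,-\},\{b,-\}]=\{\{a,b\},-\}$). The hypothesis $d\geq 0$ guarantees that every nonzero generator lies in strictly positive degree---the Hamiltonian operators attached to $A_0=\K$ vanish---so $\U(A)_0=\K$ and the grading is connected. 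Finally, smoothness in characteristic zero makes $A=\K[x_1,\dots,x_n]$ a polynomial ring, and a PBW/filtration argument gives $\U(A)$ a monomial basis in the $n$ multiplication and $n$ Hamiltonian generators, so that $\GKdim \U(A)=2n<\infty$. With these three points in hand, \cite{BGZ} shows $\U(A)$ is Calabi--Yau, and, being connected graded, its Nakayama automorphism is the identity.

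The remaining step, which I expect to be the main obstacle, is to convert ``the Nakayama automorphism of $\U(A)$ is trivial'' into ``the modular derivation of $A$ is zero.'' This requires an explicit determination of the rigid dualizing complex, equivalently the Nakayama automorphism, of $\U(A)$, together with the dictionary---developed in \cite{LWW, LWZ2}---relating the Poisson cohomology $\HP^{\bullet}(A)$ to the Hochschild cohomology of $\U(A)$ and, crucially, identifying the divergence (modular class) of $A$ with the relevant component of the Nakayama data. Under this dictionary, triviality of the Nakayama automorphism should read off as triviality of the modular class, which in the present connected graded Hopf setting is precisely the assertion that the modular derivation is zero. The delicate points are the bookkeeping of the two gradings---the internal degree on $A$ and the degree shift $d$ on the Hamiltonian part---through the dualizing-complex computation, and ensuring the comparison isomorphism respects the Hopf structure, so that homogeneity leaves no room for a nonzero coboundary representative and cohomological triviality is upgraded to literal vanishing.
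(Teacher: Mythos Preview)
Your proposal is correct and follows essentially the same route as the paper: pass to $\U(A)$, endow it with the connected $\mathbb{N}$-grading $\deg m_a=\deg a$, $\deg h_a=\deg a+d$ and the induced Hopf structure, check finite GK-dimension via PBW, apply \cite{BGZ} to get the Calabi--Yau property, and then read off unimodularity. The step you flag as the main obstacle is in fact already packaged as a clean black box in \cite{LWZ2}: the Nakayama automorphism of $\U(A)$ is explicitly computed there to be the automorphism induced by $2\delta$, so $\U(A)$ Calabi--Yau forces $\delta=0$ directly, with no further bookkeeping or Hopf compatibility to verify (and since $A$ is a polynomial ring with only scalar units, there are no nonzero log-Hamiltonian derivations, so vanishing of $\delta$ is literally, not just cohomologically, the statement of unimodularity).
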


In parallel with duality  results for Hochschild homology and 
cohomology in the setting of noncommutative Calabi-Yau algebras, 
the above result yields consequences for duality of Poisson 
homology and cohomology. Thus, as an immediate consequence of the theorem and 
\cite[Theorem 3.5 and Remark 3.6]{LWW}, we have

\begin{corollary}
\label{zzcor0.2}
Let $A$ be as in the above theorem and let $n=\GKdim A$, the rank of 
the polynomial algebra $A$. Then there is a 
Poincar{\'e} duality between the Poisson homology and cohomology:
for every Poisson $A$-module $M$, there is a functorial isomorphism 
$$HP_i(M)\cong HP^{n-i}(M)$$
for all $i = 0, \ldots , n$.
\end{corollary}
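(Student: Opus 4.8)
The plan is to deduce the statement directly from Theorem \ref{zzthm0.1} together with the twisted Poincaré duality established in \cite[Theorem 3.5]{LWW}. As observed in the introduction, the smoothness available in characteristic zero forces $A$ to be a polynomial algebra on finitely many homogeneous generators, so $A$ is the coordinate ring of a smooth affine Poisson variety of dimension $n=\GKdim A$; in particular $A$ has finite global dimension $n$ and its module of Kähler differentials $\Omega_{A/\Bbbk}$ is free of rank $n$. These are exactly the standing hypotheses under which \cite[Theorem 3.5]{LWW} applies.

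For such an $A$, \cite[Theorem 3.5]{LWW} provides, for every Poisson $A$-module $M$, a functorial isomorphism of the form
$$HP_i(M)\cong HP^{n-i}(M_\nu),$$
where $M_\nu$ denotes $M$ with its Poisson module structure twisted by the modular derivation $\nu$ of $A$, the twist measuring the failure of $A$ to be unimodular. Since the construction is carried out at the level of the Poisson (co)chain complexes, the resulting isomorphism is natural in $M$.

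The key input is then Theorem \ref{zzthm0.1}, which asserts that $A$ is unimodular, i.e. $\nu=0$. Consequently the twisting module $M_\nu$ coincides with $M$, and the twisted duality of the previous paragraph collapses to the clean Poincaré duality $HP_i(M)\cong HP^{n-i}(M)$ for all $i=0,\ldots,n$. By Remark 3.6 of \cite{LWW}, the vanishing of the modular class is precisely the condition guaranteeing honest (untwisted) duality, so invoking that remark completes the argument.

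Since every substantive step has been relegated either to Theorem \ref{zzthm0.1} or to the cited results of \cite{LWW}, there is no genuine obstacle remaining; the only point requiring care is to confirm that the normalization of the modular derivation used in Theorem \ref{zzthm0.1} agrees with the one governing the twist in \cite[Theorem 3.5]{LWW}, so that unimodularity in our sense indeed trivializes the twist appearing there.
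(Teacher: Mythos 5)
Your proposal is correct and follows exactly the paper's own route: the paper deduces Corollary \ref{zzcor0.2} as an immediate consequence of Theorem \ref{zzthm0.1} together with \cite[Theorem 3.5 and Remark 3.6]{LWW}, precisely the twisted-duality-plus-unimodularity collapse you describe. Your added care about matching normalizations of the modular derivation is a reasonable sanity check but involves no new mathematical content beyond what the cited results supply.
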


We refer to \cite{LWW, LWZ2} for other terminology in Corollary \ref{zzcor0.2}. 
Section 1 contains background on Poisson Hopf algebras, algebra, coalgebra 
and Hopf gradings. In Section 2 we recall the definition of the enveloping 
algebra $\mathcal{U}(A)$ of a Poisson algebra $A$, and collect some of the 
properties of $\mathcal{U}(A)$ needed for the proof of Theorem \ref{zzthm0.1}. 
In particular, we show that if $A$ is a connected Poisson Hopf algebra, then 
$\mathcal{U}(A)$ is a connected Hopf algebra. Background on the modular 
derivation is contained in Section 3, which also includes the proofs of 
Theorem \ref{zzthm0.1} and Corollary \ref{zzcor0.2}. An example (which is 
not an enveloping algebra of a Lie algebra) to which the main theorem applies 
is given in Example \ref{zzex1.9}, and further examples to illustrate the 
necessity of the hypotheses of the theorem appear in Section 4. Given a Hopf 
algebra, its coproduct, antipode and counit will be 
respectively denoted by $\Delta$, $S$ and $\epsilon$. The definition of the 
Gelfand-Kirillov dimension, denoted $\mathrm{GKdim}$, can be found in \cite{KL}.

\subsection*{Acknowledgments}

The authors would like to thank Ken Goodearl, Xingting Wang and 
Milen Yakimov for many useful conversations and valuable comments 
on the subject. J.J. Zhang was supported by the US National 
Science Foundation (Nos. DMS--1402863 and DMS-1700825).


\section{Definitions and preliminaries}
\label{zzsec1}
\subsection{Poisson algebras}\label{Poisson} 
Recall (from \cite[Definition 1.1]{KS}, for example) that a commutative
associative $\Bbbk$-algebra is a \emph{Poisson algebra} if there is a
$\Bbbk$-bilinear map, the \emph{Poisson bracket} of $A$,
$$\{-,-\}:A \otimes A \longrightarrow A,$$
such that $(A, \{-,-\})$ is a Lie algebra, and the Leibniz rule holds,
namely
$$\{ab,c\} = a\{b,c\} + \{a,c\}b$$
for all $a,b,c \in A$. Given Poisson algebras $A$ and $B$, an algebra
homomorphism $\alpha: A \longrightarrow B$ is a
\emph{Poisson morphism} if $\alpha(\{a,c\}) = \{\alpha(a), \alpha (c) \}$
for all $a,c \in A$. If $A$ and $B$ are Poisson algebras, then by 
\cite[Proposition 1.2.10]{KS} so is $A \otimes B$, with bracket
$$\{a \otimes b, c \otimes d \} = \{a,c\} \otimes bd + ac \otimes \{b,d\}.$$

\begin{definition}
\label{zzdef1.1}
(\cite[pp. 801-802]{Dr2}, \cite[Definitions 3.1.3, 3.1.6]{KS})
\begin{enumerate}
\item[(1)]
A \emph{Poisson Hopf algebra} is a Poisson algebra $A$ which is also
a Hopf algebra, such that $\Delta:A \longrightarrow A \otimes A$ is a Poisson morphism.
\item[(2)]
A \emph{Poisson algebraic group $G$ over $\Bbbk$} is an affine
algebraic $\Bbbk$-group whose coordinate algebra $\Bbbk(G)$ is a
Poisson Hopf algebra; equivalently, the multiplication
$m: G \times G \longrightarrow G$ is a Poisson map.
\end{enumerate}
\end{definition}
In fact,when Definition \ref{zzdef1.1}(1) applies, it is not hard to show that $\epsilon$ is a Poisson morphism and $S$ is a Poisson anti-morphism; see e.g. \cite[p.96]{LWZ1}.

\subsection{Gradings on algebras and coalgebras}
\label{zzsec1.2} 
We fix some terminology to discuss  gradings on three 
operations - the multiplication, the  Poisson bracket, and the 
comultiplication. The graded algebras $A$ under consideration 
in this paper will typically be $\mathbb{N}$-graded; that is, 
$A = \bigoplus_{i \in {\mathbb N}}A(i)$ for $\Bbbk$-subspaces 
$A(i)$, with $A(i)A(j) \subseteq A(i+j)$. We say that $A$ is 
\emph{connected as an algebra} if $A(0) = \Bbbk$.

Suppose that an $\mathbb{N}$- or $\mathbb{Z}$-graded algebra 
$A$ is commutative and Poisson. If there exists an integer $d$ with
$$ \{A(i), A(j)\} \subseteq A(i+j +d) $$
for all $i$ and $j$, we say that the bracket of $A$ has \emph{degree} 
$d$. In this case, $A$ is called a \emph{graded Poisson algebra}, or - equivalently - that $\{-,-\}$ is \emph{homogeneous}.

Recall that the \emph{coradical filtration} of a coalgebra $H$ is
the ascending chain of subcoalgebras $\{H_n : n \geq 0 \}$ of $H$,
with $H_0$ the coradical of $H$ and $H_n$ defined inductively by
$$ H_n \quad := \quad \Delta^{-1}(H \otimes H_0 + H_{n-1} \otimes H) $$
for $n > 0$, \cite[5.2]{Mo}. Then $H = \bigcup_{n \geq 0} H_n$; 
and if $H$ is a pointed Hopf algebra, then $\{H_n\}$ is an algebra 
filtration, \cite[Lemma 5.2.8]{Mo}. We shall say that the Hopf 
algebra $H$ is \emph{connected as a Hopf algebra} if it is connected 
as a coalgebra - that is, if $H_0 = \Bbbk$.

The following lemma is easily proved by induction on $n$.

\begin{lemma}
\label{zzlem1.2} 
\cite[Lemma 7.1]{LWZ1} Let $A$ be a pointed Poisson Hopf algebra 
with coradical filtration $\{A_n\}$. Suppose that $\{g,h\} = 0$ 
for all group-like elements $g,h \in A$. Then, for all $i,j \geq 0$, 
$$ \{A_i, A_j \} \subseteq A_{i+j}. $$
\end{lemma}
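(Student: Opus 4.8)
The plan is to argue by induction on $n=i+j$, using as the membership criterion the very definition $A_n=\Delta^{-1}(A\otimes A_0+A_{n-1}\otimes A)$ together with the hypothesis that $\Delta$ is a Poisson morphism. For the base case $n=0$, note that since $A$ is pointed its coradical $A_0=\Bbbk G$ is the span of the set $G$ of group-like elements; hence the assumption $\{g,h\}=0$ for group-likes gives, by bilinearity, $\{A_0,A_0\}=0\subseteq A_0$.

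For the inductive step, fix $a\in A_i$ and $b\in A_j$ with $i+j=n\geq 1$, and assume $\{A_p,A_r\}\subseteq A_{p+r}$ whenever $p+r\leq n-1$. By definition of $A_n$ it suffices to show $\Delta(\{a,b\})\in A\otimes A_0+A_{n-1}\otimes A$. Since $\Delta$ is a Poisson morphism, $\Delta(\{a,b\})=\{\Delta a,\Delta b\}$, which I would expand with the tensor bracket $\{x\otimes y,z\otimes w\}=\{x,z\}\otimes yw+xz\otimes\{y,w\}$. The crucial preparatory input is the standard coalgebra-filtration property of the coradical filtration, $\Delta(A_i)\subseteq\sum_{p+q=i}A_p\otimes A_q$ (see e.g. \cite[5.2]{Mo}), and likewise for $b$. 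Writing $\Delta a$ and $\Delta b$ as sums of terms homogeneous in coradical degree in each tensor slot, every summand of $\{\Delta a,\Delta b\}$ then has one of the two shapes $\{a_1,b_1\}\otimes a_2b_2$ or $a_1b_1\otimes\{a_2,b_2\}$, with $a_1\in A_p$, $a_2\in A_{i-p}$, $b_1\in A_r$, $b_2\in A_{j-r}$ for various $0\leq p\leq i$ and $0\leq r\leq j$.

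It then remains to check that each such summand lies in $A\otimes A_0+A_{n-1}\otimes A$, and here the cases split according to $p+r$. If $p+r\leq n-1$, then the algebra-filtration property $A_pA_r\subseteq A_{p+r}$ places $a_1b_1$ in $A_{p+r}\subseteq A_{n-1}$, and the inductive hypothesis places $\{a_1,b_1\}$ in $A_{p+r}\subseteq A_{n-1}$, so both types of summand lie in $A_{n-1}\otimes A$. The only other possibility is $p+r=n$, which forces $p=i$ and $r=j$, hence $a_2,b_2\in A_0$; then $\{a_1,b_1\}\otimes a_2b_2\in A\otimes A_0$ because $a_2b_2\in A_0$, while $a_1b_1\otimes\{a_2,b_2\}$ vanishes since $\{a_2,b_2\}\in\{A_0,A_0\}=0$. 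Summing over all summands gives $\Delta(\{a,b\})\in A\otimes A_0+A_{n-1}\otimes A$, whence $\{a,b\}\in A_n$, completing the induction.

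The step I expect to be the main obstacle, and the one where the hypothesis genuinely enters, is exactly the boundary case $p+r=n$: there the inductive hypothesis cannot be applied to $\{a_1,b_1\}$, whose arguments lie in $A_i$ and $A_j$ and thus already have total degree $n$. One of the two boundary summands survives only because its other tensor factor $a_2b_2$ has dropped into $A_0$, and the other survives only because it is annihilated by the assumption $\{A_0,A_0\}=0$. It is worth stressing that this bookkeeping relies on the finer two-sided decomposition $\Delta(A_i)\subseteq\sum_{p+q=i}A_p\otimes A_q$ rather than on the one-sided inclusion used to define $A_n$; the latter alone does not let one track the degree in both tensor slots simultaneously.
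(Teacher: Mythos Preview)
Your argument is correct and follows exactly the approach the paper indicates: the paper does not spell out the proof but simply remarks that the lemma ``is easily proved by induction on $n$'' and cites \cite[Lemma 7.1]{LWZ1}. Your induction on $n=i+j$, using the coalgebra-filtration property $\Delta(A_i)\subseteq\sum_{p+q=i}A_p\otimes A_q$, the algebra-filtration property of $\{A_n\}$ for pointed Hopf algebras, and the Poisson-morphism property of $\Delta$, is precisely what is intended.
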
 

Turning now to gradings on Hopf algebras, the relevant definitions 
are as follows.

\begin{definition}
\label{zzdef1.3}
Let $H$ be a Hopf algebra.
\begin{enumerate}
\item[(1)]
$H$ is a \emph{graded Hopf algebra} if it is simultaneously
$\mathbb{N}$-graded as an algebra and as a coalgebra - that is,
$H = \bigoplus_{i \geq 0}H(i)$ for some vector subspaces $H(i)$,
with $H(i)H(j) \subseteq H(i+j)$, $S(H(i)) \subseteq H(i)$ and
$$\Delta (H(i)) \subseteq 
\bigoplus_{\ell \geq 0} H(\ell) \otimes H(i - \ell)$$
for all $i,j \geq 0$.
\item[(2)]
$H$ is a \emph{coradically graded Hopf algebra} if it is a graded
Hopf algebra with $H_i = \bigoplus_{j \leq i} H(j)$ for all $i \geq 0$.
\end{enumerate}
\end{definition}

When working with graded Hopf algebras it is frequently useful
to make use of the adjustment permitted by the following lemma,
a slight improvement of \cite[Lemma 2.1(2)]{BGZ}.

\begin{lemma}
\label{zzlem1.4}
Let $H$ be a Hopf algebra which is a connected graded algebra,
$H = \bigoplus_{i \geq 0} H(i)$.
\begin{enumerate}
\item[(i)]
There is an algebra grading of $H$, $H = \bigoplus_{i \geq 0} B(i)$,
such that that $\ker \epsilon = \bigoplus_{i \geq 1} B(i)$.
\item[(ii)]
Suppose that $H$ is a graded Hopf algebra through the given
grading $H= \bigoplus_{i \geq 0} H(i)$. Then
$\ker \epsilon = \bigoplus_{i \geq 1} H(i)$.
\end{enumerate}
\end{lemma}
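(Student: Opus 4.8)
The plan is to handle the two parts separately, obtaining (ii) by a direct computation with the counit axioms and (i) by correcting the given grading with a winding automorphism.

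For part (ii), fix a homogeneous element $x\in H(i)$ with $i\ge 1$, and use that $H$ is graded as a coalgebra to write $\Delta(x)=\sum_{p+q=i}(\Delta x)_{p,q}$ with $(\Delta x)_{p,q}\in H(p)\ot H(q)$. I would first apply $\id\ot\epsilon$: since $(\id\ot\epsilon)((\Delta x)_{p,q})\in H(p)$, comparing homogeneous components in the identity $(\id\ot\epsilon)\Delta=\id$ forces the top term to satisfy $(\id\ot\epsilon)((\Delta x)_{i,0})=x$; as $H(0)=\Bbbk\cdot 1$ this yields $(\Delta x)_{i,0}=x\ot 1$. Applying instead $\epsilon\ot\id$ to the identity $(\epsilon\ot\id)\Delta=\id$, the summand indexed by $(p,q)$ lands in $H(q)=H(i-p)$, so all contributions with $p\ge 1$ must vanish; the $(i,0)$-summand contributes $(\epsilon\ot\id)(x\ot 1)=\epsilon(x)\,1$ in degree $0$, whence $\epsilon(x)=0$. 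This proves $\bigoplus_{i\ge 1}H(i)\subseteq\ker\epsilon$, and since both sides have codimension one (using $H(0)=\Bbbk$ and $\epsilon(1)=1$) they coincide.

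For part (i), the given grading need not satisfy $\epsilon(H(i))=0$ for $i\ge 1$, so I would transport it by an algebra automorphism. Let $\pi\colon H\to H(0)=\Bbbk$ be the projection along the two-sided ideal $H_{\ge 1}:=\bigoplus_{i\ge 1}H(i)$; because $H_{\ge 1}$ is an ideal and $H(0)=\Bbbk$, this $\pi$ is an algebra homomorphism, i.e. a character of $H$. I then form the winding map $\tau:=(\pi\ot\id)\circ\Delta\colon H\to H$. Since $\Delta$ is an algebra homomorphism and $\pi$ is a character, $\tau$ is an algebra endomorphism, and it is invertible with inverse $((\pi\circ S)\ot\id)\circ\Delta$ because characters of a Hopf algebra are convolution-invertible (with inverse $\pi\circ S$); hence $\tau$ is an algebra automorphism. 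Setting $B(i):=\tau(H(i))$ then gives a connected algebra grading $H=\bigoplus_{i\ge 0}B(i)$, as $\tau$ is a bijective algebra map with $\tau(\Bbbk)=\Bbbk$. Finally, the counit axiom gives $\epsilon\circ\tau=\pi$, since $\epsilon(\tau(h))=\sum\pi(h_1)\epsilon(h_2)=\pi(h)$; therefore $\tau$ maps $\ker\pi=H_{\ge 1}$ into $\ker\epsilon$, and a codimension-one count yields $\bigoplus_{i\ge 1}B(i)=\tau(H_{\ge 1})=\ker\epsilon$, as required.

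The bulk of the work is in part (i): one must manufacture an algebra automorphism intertwining the canonical character $\pi$ with the counit, and the winding automorphism does precisely this. The step to watch is the verification that $\tau$ is genuinely invertible rather than merely an endomorphism, which rests on the convolution-invertibility of characters of a Hopf algebra; the remaining assertions are routine bookkeeping with the grading and the counit identities.
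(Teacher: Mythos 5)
Your proposal is correct, and it is worth recording how it relates to the paper's treatment. For part (ii) the paper proceeds by induction on $i$: it writes $\Delta(x)=x_1\otimes 1+1\otimes x_2+y$ with $y\in\bigoplus_{j=1}^{i-1}H(j)\otimes H(i-j)$, invokes the inductive hypothesis to conclude that $\epsilon$ vanishes on the tensor factors of $y$, and then applies $\mu\circ(\mathrm{Id}\otimes\epsilon)$ and $\mu\circ(\epsilon\otimes\mathrm{Id})$ to deduce first $x_1=x$ and then $\epsilon(x)=0$. Your argument reaches the same conclusions by comparing homogeneous components directly in the counit identities, which makes the induction (and any control over the middle terms of $\Delta(x)$) unnecessary; this is a genuine, if mild, streamlining of the same underlying idea. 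For part (i) the paper offers no argument at all, simply citing \cite[Lemma 2.1(2)]{BGZ}, whereas you give a self-contained proof: the projection $\pi$ onto $H(0)$ along the two-sided ideal $\bigoplus_{i\geq 1}H(i)$ is a character, the left winding map $\tau=(\pi\otimes\mathrm{id})\circ\Delta$ is an algebra automorphism because characters of a Hopf algebra are convolution-invertible with inverse $\pi\circ S$, and the identity $\epsilon\circ\tau=\pi$ transports the given grading to one whose irrelevant ideal is exactly $\ker\epsilon$. All the verifications you flag---that $\pi$ is multiplicative, that $\tau^{-1}=((\pi\circ S)\otimes\mathrm{id})\circ\Delta$, and the closing codimension-one counts in both parts---are sound, so your write-up in fact supplies the content of the lemma the paper defers to, rather than quoting it.
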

\begin{proof}
(i) This is \cite[Lemma 2.1(2)]{BGZ}.

(ii) Clearly it is enough to show that $H(i) \subseteq \ker \epsilon$
for all $i \geq 1$. So, let $i \geq 1$ and let $x \in H(i)$. We show that
$\epsilon (x) = 0$ by induction on $i$. Since $H$ is a graded coalgebra
and $H(0) = k$,
\begin{equation}
\label{E1.4.1} \tag{E1.4.1}
\Delta (x) \quad = \quad x_1 \otimes 1 + 1 \otimes x_2 + y,
\end{equation}
where $y \in \bigoplus_{j=1}^{i-1}H(j) \otimes H(i-j)$. By induction,
$y = \sum y_1 \otimes y_2$ with all terms $y_1, y_2 \in \ker \epsilon$.
Apply $\mu\circ (\mathrm{Id} \otimes \epsilon)$ to \eqref{E1.4.1},
where $\mu:H \otimes H \rightarrow H$ is multiplication, yielding
$$ x \quad = \quad x_1 + \epsilon (x_2). $$
Since $x$ and $x_1$ are in $H(i)$ while $\epsilon (x_2) \in H(0)$, we
deduce that $\epsilon (x_2) = 0$ and $x_1 = x$. Similarly, using
instead  $\mu\circ (\epsilon \otimes \mathrm{Id})$, it follows that
$\epsilon (x_1) = 0$. Thus $x \in \ker \epsilon$ as required.
\end{proof}

Coradically graded Hopf algebras occur very naturally in the study
of pointed Hopf algebras. The first part of following result is
recorded for example in \cite[Definition 1.13]{AS}; the second 
part is proved in \cite[Theorem 6.9]{Zh}.

\begin{proposition}
\label{zzlem1.5}
Let $H$ be a pointed Hopf algebra with coradical filtration
$\mathcal{C} = \{H_n \}$. Set $H(0) := H_0$ and
$H(i) := H_i/H_{i-1}$ for $i \geq 1$, and let
$$\mathrm{gr}_{\mathcal{C}}H \quad = \quad  \bigoplus_{i \geq 0} H(i)$$
be the associated graded algebra, which is a Hopf algebra with the
operations induced from $H$. 
\begin{enumerate}
\item[(i)] $\mathrm{gr}_{\mathcal{C}}H$
is a coradically graded Hopf algebra.
\item[(ii)] If $H$ is a connected Hopf algebra of finite 
GK-diemension $n$, then $\mathrm{gr}_{\mathcal{C}}H$ is a 
commutative polynomial algebra in $n$ variables.
\end{enumerate}
\end{proposition}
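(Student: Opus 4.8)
The plan is to prove the two parts in turn, with part (ii) resting on part (i) together with a duality argument and the Milnor--Moore theorem. For part (i), I would first record that the coradical filtration $\mathcal{C}=\{H_n\}$ is a Hopf-algebra filtration: it is automatically a coalgebra filtration, since $\Delta(H_n)\subseteq\sum_{i=0}^{n}H_i\otimes H_{n-i}$ is a standard property of the coradical filtration; because $H$ is pointed it is moreover an algebra filtration by \cite[Lemma 5.2.8]{Mo}, and one checks $S(H_n)\subseteq H_n$. Hence the induced product, coproduct, counit and antipode make $R:=\mathrm{gr}_{\mathcal C}H=\bigoplus_i H(i)$ a graded Hopf algebra in the sense of Definition \ref{zzdef1.3}(1), with $H(0)=H_0$ a cosemisimple subcoalgebra. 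It then remains to identify the coradical filtration of $R$ with its grading, i.e.\ to show $R_n=\bigoplus_{j\le n}H(j)$ for all $n$. Writing $R_{[n]}:=\bigoplus_{j\le n}H(j)$, the family $\{R_{[n]}\}$ is a coalgebra filtration of $R$ with $R_{[0]}=H_0$; one first checks that $H_0$ is exactly the coradical of $R$ (it is cosemisimple, and a degree count rules out further simple subcoalgebras), and then the characterization of the coradical filtration as being contained termwise in every coalgebra filtration whose bottom term contains the coradical yields $R_n\subseteq R_{[n]}$, while the reverse inclusion follows by unwinding how the coproduct on $R$ encodes the inductive definition of $\{H_n\}$. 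This reproduces the statement cited from \cite[Definition 1.13]{AS}.

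For part (ii), since $H$ is connected its coradical is $H_0=\Bbbk$, so by part (i) $R=\mathrm{gr}_{\mathcal C}H$ is a \emph{connected}, coradically graded Hopf algebra. Before dualizing I would secure the finiteness hypotheses: finite Gelfand--Kirillov dimension forces $\dim_{\Bbbk}H(i)<\infty$ for every $i$ and $\GKdim R=\GKdim H=n$. Granting this, I form the graded dual $R^{\,\mathrm g}=\bigoplus_i H(i)^{*}$, which is again a connected graded Hopf algebra, with the product of $R^{\,\mathrm g}$ dual to the coproduct of $R$ and vice versa. The crucial structural point is that the condition ``$R$ is coradically graded'' dualizes to ``$R^{\,\mathrm g}$ is generated as an algebra by its degree-one part'': under the pairing the coradical filtration of $R$ corresponds to the powers of the augmentation ideal of $R^{\,\mathrm g}$, so equality of filtration and grading on $R$ becomes generation in degree one for $R^{\,\mathrm g}$. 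Since $R^{\,\mathrm g}(1)=H(1)^{*}$ consists of primitive elements, $R^{\,\mathrm g}$ is generated by its primitives and is therefore cocommutative.

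Now the Milnor--Moore theorem, valid because $\char\Bbbk=0$, identifies a connected graded cocommutative Hopf algebra with the universal enveloping algebra of its graded Lie algebra of primitives: $R^{\,\mathrm g}\cong U(\g)$ with $\g:=P(R^{\,\mathrm g})$. Dualizing once more gives $R\cong U(\g)^{\,\mathrm g}$, the graded dual of an enveloping algebra; since $U(\g)$ is cocommutative this dual is commutative, and over a field of characteristic zero it is the coordinate ring of the unipotent group with Lie algebra $\g$, hence a commutative polynomial algebra. For the count of variables, note that $R$ and $R^{\,\mathrm g}$ share the same (rational) Hilbert series, so $\GKdim R^{\,\mathrm g}=n$ as well, whence $\dim_{\Bbbk}\g=\GKdim U(\g)=n$; thus $R$ is a polynomial algebra in $n$ variables, matching \cite[Theorem 6.9]{Zh}.

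The routine work is the bookkeeping with the coradical filtration in part (i). I expect the main obstacle to lie in the duality step of part (ii): proving carefully that a coradically graded connected Hopf algebra has graded dual generated in degree one, so that the Milnor--Moore theorem applies, and establishing the two underlying finiteness facts --- finite-dimensionality of each graded piece $H(i)$ and the equality $\GKdim R=\GKdim H$ --- on which the entire graded-dual argument depends.
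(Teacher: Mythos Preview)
The paper does not actually prove this proposition: it merely records that part (i) is in \cite[Definition 1.13]{AS} and that part (ii) is \cite[Theorem 6.9]{Zh}. So there is no in-paper argument to compare against; your proposal is far more detailed than anything the paper supplies.

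That said, your outline is sound and in fact tracks closely the circle of ideas the paper itself invokes a few lines later, in the proof of Theorem~\ref{zzthm1.6}(ii). There the authors pass to the graded dual of a connected coradically graded Hopf algebra, use \cite[Lemma 5.5]{AS} (equivalently \cite[Proposition 5.5(4)]{BG}) to conclude generation in degree one, and identify the dual with an enveloping algebra --- exactly the duality/Milnor--Moore mechanism you propose for (ii). So your route is not a genuinely different approach but rather an unpacking of the argument behind the cited results, using the same toolkit the paper relies on elsewhere.

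Your self-diagnosis of the obstacles is accurate. The delicate points really are the two finiteness statements: that each $H(i)$ is finite-dimensional, and that $\GKdim(\mathrm{gr}_{\mathcal{C}}H)=\GKdim H$. The first can be reduced to finite-dimensionality of $P(H)$ (since $U(P(H))\subseteq H$ forces $\dim P(H)\le\GKdim H<\infty$), after which one argues inductively on the coradical filtration of a connected coalgebra; the second is handled in \cite{Zh} and is where most of the work in Zhuang's proof lies. Everything else in your sketch --- the Hopf-filtration property of $\{H_n\}$, the identification of the coradical filtration of $\mathrm{gr}_{\mathcal{C}}H$ with its grading, cocommutativity from generation by primitives, and the appeal to Lazard for ``unipotent $\Rightarrow$ polynomial'' --- is standard and correctly stated.
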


Let $H$ be a graded Hopf algebra. Then it is straightforward 
to show that, keeping the assigned grading,
\begin{equation}
\label{E1.5.1}\tag{E1.5.1} 
H \textit{ a connected graded algebra } \Longrightarrow H 
\textit{ a connected Hopf algebra.}
\end{equation}
To prove \eqref{E1.5.1}, note first that the coradical $H_0$ 
is graded, and then prove that $H_0 \cap H(i) = \{0\}$ for 
$i \geq 1$ by induction on $i$. However, the reverse 
implication is false even when $H$ is commutative, as 
explained in Theorem \ref{zzthm1.6} below.

\subsection{Gradings of commutative Hopf algebras}
\label{zzseec1.3} 
Our focus in this paper is on polynomial algebras and their 
deformations. The possible Hopf structures in this case are 
as described in the following theorem. Details concerning 
part (i) can be found at \cite[Theorem 0.1]{BGZ}. Its 
non-trivial content is due to Serre for $(1)\Longrightarrow (3)$, 
(see e.g. \cite[Theorem 6.2.2]{Be}), and to Lazard \cite{La} for 
$(3)\Longrightarrow(4)$. Note that a unipotent algebraic 
$\Bbbk$-group $U$ is \emph{Carnot} if its Lie algebra 
$\mathfrak{u}$ is $\mathbb{N}$-graded, 
$\mathfrak{u} = \bigoplus_{i=1}^t \mathfrak{u}_i$, with 
$\mathfrak{u} = \langle \mathfrak{u}_1 \rangle$. 
See for example \cite{Co} for further details and references.

\begin{theorem}
\label{zzthm1.6}
Let $H$ be an affine commutative Hopf $\Bbbk$-algebra. 
\begin{itemize}
\item[(i)] 
Then the following are equivalent.
\begin{enumerate}
\item[(1)] 
$H$ is connected graded as an algebra.
\item[(2)] 
$H$ is connected as a Hopf algebra.
\item[(3)] 
$H$ is a polynomial algebra of finite rank.
\item[(4)] 
$H$ is the coordinate algebra of a unipotent algebraic $\Bbbk$-group $U$.
\end{enumerate}
\item[(ii)] 
Let $H$ satisfy the equivalent hypotheses of (i). 
Then the following are equivalent.
\begin{enumerate}
\item[(5)] 
$H$ is coradically graded.
\item[(6)] 
$U$ is a Carnot group.
\end{enumerate}
\end{itemize}
\end{theorem}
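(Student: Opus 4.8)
The plan is to prove Theorem~\ref{zzthm1.6} by establishing the equivalences in two stages, drawing heavily on the structure theory of commutative Hopf algebras over an algebraically closed field of characteristic zero.

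For part (i), the implications $(3)\Longleftrightarrow(4)$ and $(1)\Longrightarrow(2)$ are the accessible pieces. The equivalence $(3)\Longleftrightarrow(4)$ is classical: an affine commutative Hopf algebra over $\Bbbk$ is the coordinate algebra of an affine algebraic group, and such an algebra is a polynomial ring precisely when the group is unipotent (in characteristic zero a unipotent group is isomorphic as a variety to affine space via the exponential/logarithm maps, and conversely the only algebraic groups with polynomial coordinate ring are the unipotent ones). The implication $(1)\Longrightarrow(2)$ is exactly the content of display~\eqref{E1.5.1}, applied to the given grading: I would invoke that statement directly, first noting the coradical $H_0$ is a graded subalgebra and then arguing by induction that $H_0\cap H(i)=0$ for $i\geq1$, forcing $H_0=H(0)=\Bbbk$.

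The substantive direction is $(1)\Longrightarrow(3)$, and here the excerpt explicitly attributes the non-trivial content to Serre for $(1)\Longrightarrow(3)$ and to Lazard for $(3)\Longrightarrow(4)$ in the unipotent setting. My plan is therefore to cite these: a connected graded commutative Hopf algebra, being a domain whose irrelevant ideal is its augmentation ideal, defines a smooth connected affine group scheme in characteristic zero; Serre's theorem (as in \cite[Theorem 6.2.2]{Be}) identifies such graded Hopf algebras with polynomial rings. To close the cycle I would prove $(3)\Longrightarrow(1)$ or $(2)\Longrightarrow(1)$ to knit the four conditions together: given $H$ connected as a Hopf algebra, Proposition~\ref{zzlem1.5}(ii) shows $\mathrm{gr}_{\mathcal C}H$ is a polynomial algebra, and since $H$ is commutative and the coradical filtration is an algebra filtration, one recovers a connected algebra grading on $H$ itself. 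The main obstacle I anticipate is precisely organizing which implications are genuinely proved versus cited, and ensuring the characteristic-zero smoothness hypothesis is invoked correctly so that $\GKdim H<\infty$ yields a \emph{finite} rank polynomial algebra rather than merely a regular ring.

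For part (ii), assuming (i), I would identify condition (5), coradical gradedness, with the Carnot property (6) of the unipotent group $U$ via the correspondence between the graded structure on $H=\Bbbk(U)$ and the grading on the Lie algebra $\mathfrak u$. Under $(1)$--$(4)$, $U$ is unipotent with Lie algebra $\mathfrak u$, and $H$ is the symmetric/polynomial algebra whose coradical filtration corresponds to the lower central series of $\mathfrak u$. The equivalence $(5)\Longleftrightarrow(6)$ then amounts to matching the condition $H_i=\bigoplus_{j\leq i}H(j)$ with the requirement that $\mathfrak u=\bigoplus_{i=1}^t\mathfrak u_i$ is generated in degree one, $\mathfrak u=\langle\mathfrak u_1\rangle$; I would make this precise by dualizing the coradical filtration on $H$ to the filtration on $\mathfrak u=(\ker\epsilon/(\ker\epsilon)^2)^*$ and observing that coradical gradedness is exactly the statement that the associated graded Lie algebra is generated in degree one, which is the definition of Carnot recorded in the paragraph preceding the theorem. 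The expected difficulty here is keeping the duality between the coalgebra grading on $H$ and the Lie-algebra grading on $\mathfrak u$ bookkept correctly.
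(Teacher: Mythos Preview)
Your handling of part (i) matches the paper's: both treat (i) as essentially a citation, with the deep implications $(1)\Longrightarrow(3)$ (Serre) and $(3)\Longrightarrow(4)$ (Lazard) sourced from the literature, the paper referring wholesale to \cite[Theorem 0.1]{BGZ}. One point needs correction, though: your invocation of \eqref{E1.5.1} for $(1)\Longrightarrow(2)$ is not legitimate. Display \eqref{E1.5.1} is stated only for \emph{graded Hopf algebras}, i.e.\ when the comultiplication and antipode also respect the grading, whereas condition (1) asserts merely that $H$ is connected graded as an \emph{algebra}. With no compatibility assumed on $\Delta$, the coradical $H_0$ need not be a graded subspace, so your inductive argument that $H_0\cap H(i)=0$ does not get started. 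This gap is harmless in the end, since $(1)\Longrightarrow(2)$ is recovered from the cycle $(1)\Longrightarrow(3)\Longrightarrow(4)\Longrightarrow(2)$, the last implication holding because a unipotent group has no nontrivial irreducible representations.

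For part (ii), your direction $(5)\Longrightarrow(6)$ via duality is exactly what the paper does: it identifies the graded dual $\bigoplus_i H(i)^{\ast}$ with $\mathcal{U}(\mathfrak u)$ (citing \cite[Proposition 5.5]{BG}) and then uses \cite[Lemma 5.5]{AS} to conclude that coradical gradedness of $H$ forces $\mathcal{U}(\mathfrak u)$, and hence $\mathfrak u$, to be generated in degree one. For $(6)\Longrightarrow(5)$, however, the paper takes a shorter route than the pure duality you sketch. It observes that $\mathrm{gr}_{\mathcal C}H$ is always coradically graded (Proposition~\ref{zzlem1.5}(i)) and is the coordinate ring of the associated Carnot group $\mathrm{gr}(U)$; since $U$ is assumed Carnot, $U=\mathrm{gr}(U)$, whence $H\cong\mathrm{gr}_{\mathcal C}H$ as Hopf algebras and $H$ is coradically graded. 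Your plan to dualize the coradical filtration to the lower central series would eventually reach the same conclusion, but it requires more bookkeeping on the pairing than you indicate (and your identification of $H$ with a ``symmetric/polynomial algebra'' blurs the distinction between $\mathcal{O}(U)$ and $S(\mathfrak u)$); the paper's $U=\mathrm{gr}(U)$ trick sidesteps all of that.
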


\begin{proof}  
(ii)(5)$\Longrightarrow$(6):  
Suppose that $H$ is coradically graded, with, in view of (i),
$$ H = k(U) = k[X_1, \ldots , X_n] = k \oplus \bigoplus_{i \geq 1}H(i),$$
for some $n \geq 1$. Moreover, setting $\mathfrak{u}$ to be the Lie 
algebra of $U$ we have
$$ \mathcal{U}(\mathfrak{u}) \cong \bigoplus_{i \geq 0}H(i)^{\ast}, $$
the graded dual of $H$, by \cite[Proposition 5.5(1),(3)]{BG}, and 
$\mathcal{U}(\mathfrak{u})$ is generated by 
$\mathfrak{u}_1 = H(1)^{\ast}$, by \cite[Lemma 5.5]{AS}, see also 
\cite[Proposition 5.5(4)]{BG}. Thus $\mathfrak{u}$ is a graded 
Lie algebra generated in degree 1. Hence $\mathfrak{u}$ and, 
equivalently, $U$, are Carnot.

(6)$\Longrightarrow$(5): 
Suppose that $H = k(U)$ with $U$ a Carnot group. Consider the 
associated graded algebra $\mathrm{gr}_{\mathcal{C}}H$ with 
respect to the coradical filtration $\mathcal{C} = \{H_n\}$ of 
$H$. By Lemma \ref{zzlem1.5} this is a connected and coradically 
graded Hopf algebra, and it is easy to see - for example, it 
follows by considering the graded dual, as in the proof of 
(5)$\Longrightarrow$(6) - that  $\mathrm{gr}_{\mathcal{C}}H$ is 
the coordinate algebra of the associated graded (Carnot) 
unipotent group $\mathrm{gr}(U)$ of $U$. But $U$ is Carnot, 
so $U = \mathrm{gr}(U)$, and hence $H$ is coradically graded. 
\end{proof}

It is natural to synthesise the concepts defined in subsection 
\ref{zzsec1.2} for the Poisson and Hopf categories, thus 
arriving at graded versions of the concepts introduced by 
Drinfeld, see Definition \ref{zzdef1.1}, as follows.

\begin{definition}
\label{zzdef1.7}
Let $n \geq 1$, let $d_1, \ldots , d_n$ be non-negative integers, and
let $A = k[x_1, \dots , x_n]$, graded so that $x_i$ is homogeneous
of degree $d_i$ for $i=1, \ldots , n$. Suppose that $\{-,-\}$ is a Poisson
bracket on $A$.
\begin{enumerate}
\item[(1)]
If $A$ is a Poisson Hopf algebra, simultaneously graded both as
a Hopf algebra and a Poisson algebra, then $A$ is called a
\emph{graded Poisson Hopf algebra}.
\item[(2)]
If in (1) the Poisson bracket has degree $d$, as defined in 
Section \ref{zzsec1.2},  then $A$ is a
\emph{graded Poisson Hopf algebra of degree $d$}.
\item[(3)]
If in (1) [respectively, (2)] $A$ is coradically graded, then $A$ is a
\emph{coradically graded Poisson Hopf algebra}
[respectively, \emph{of degree $d$}].
\end{enumerate}
\end{definition}

Here are important mechanisms for the construction of coradically 
graded Poisson Hopf algebras. The first is an immediate corollary 
of Lemma \ref{zzlem1.2} and Proposition \ref{zzlem1.5}(i), and 
the second and third follow from Proposition \ref{zzlem1.5}(ii), 
using the commutator bracket on $H$ to induce a Poisson bracket 
on $\mathrm{gr}_{\mathcal{C}}H$. Note that (ii) is the more 
familiar case $t = 1$ of (iii). The proof of the general case 
is straightforward, and is left to the reader. 

\begin{corollary}
\label{zzcor1.8}
\begin{enumerate}
\item[(i)] 
Let $A$ be a pointed Poisson Hopf algebra with coradical 
filtration $\mathcal{C} = \{A_n\}$, and suppose that 
$\{g,h\} = 0$ for all group-like elements $g,h, \in A$. Then 
$\mathrm{gr}_{\mathcal{C}}A$ carries a Poisson bracket 
induced from the bracket on $A$, and as such it is 
coradically graded Poisson Hopf algebra of degree 0. 
\item[(ii)] 
Let $H$ be a connected Hopf algebra of finite GK-dimension 
$n$ and coradical filtration $\mathcal{C}$. Then 
$\mathrm{gr}_{\mathcal{C}}H = \Bbbk [x_1, \ldots , x_n]$ 
is a coradically graded Poisson Hopf algebra of degree $-1$.
\item[(iii)] 
In the setting of part {\rm{(ii)}}, let 
$x_i \in \mathrm{gr}_{\mathcal{C}}H(m_i)$ where 
$m_i \in \mathbb{N}$, and choose 
$y_1, \ldots , y_n \in H$ 
be such that $\mathrm{gr}_{\mathcal{C}}y_i = x_i$, for 
$i = 1, \ldots , n$. Let $t \in \mathbb{Z}$  
be maximal such that $[y_i,y_j] \in H_{m_i + m_j -t}$ for 
all $i$ and $j$, (so that $t \geq1$ by Proposition \ref{zzlem1.5}(ii)). 
Taking images of the brackets $[y_i,y_j]$ 
in the space $H_{m_i + m_j - t}/H_{m_i + m_j -t -1} \subseteq \mathrm{gr}_{\mathcal{C}}H$ 
yields a coradically graded 
Poisson Hopf algebra structure of degree $d = -t$ on 
$\Bbbk [x_1, \ldots , x_n]$. This Poisson bracket is trivial if and only if $H$ is commutative.
\end{enumerate}
\end{corollary}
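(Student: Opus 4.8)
The plan is to handle part (i) by \emph{descending} the given bracket to the associated graded, and parts (ii) and (iii) by the standard mechanism that produces a Poisson bracket on the associated graded of a filtered algebra whose associated graded is commutative.

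For part (i), Proposition \ref{zzlem1.5}(i) already supplies the coradically graded Hopf structure on $\mathrm{gr}_{\mathcal{C}}A$, so only the Poisson bracket requires attention. The hypothesis that $\{g,h\}=0$ on group-like elements is exactly what is needed to invoke Lemma \ref{zzlem1.2}, giving $\{A_i,A_j\}\subseteq A_{i+j}$ for all $i,j$. Hence, for $\bar a\in A(i)=A_i/A_{i-1}$ and $\bar b\in A(j)$, I would define $\{\bar a,\bar b\}$ to be the image of $\{a,b\}$ in $A(i+j)$. This is well defined: replacing $a$ by $a+c$ with $c\in A_{i-1}$ alters $\{a,b\}$ by $\{c,b\}\in A_{i-1+j}$, which dies in $A(i+j)$, and likewise in the second argument. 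Bilinearity, antisymmetry, the Jacobi identity and the Leibniz rule are all identities that pass verbatim to the associated graded, the resulting bracket satisfies $\{A(i),A(j)\}\subseteq A(i+j)$ and so has degree $0$, and $\mathrm{gr}_{\mathcal{C}}\Delta$ remains a Poisson morphism because $\Delta$ is one on $A$.

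For parts (ii) and (iii), Proposition \ref{zzlem1.5}(ii) makes $\mathrm{gr}_{\mathcal{C}}H=\Bbbk[x_1,\ldots,x_n]$ a commutative polynomial algebra, and Proposition \ref{zzlem1.5}(i) makes it coradically graded as a Hopf algebra. Commutativity of $\mathrm{gr}_{\mathcal{C}}H$ means that for $a\in H_p$, $b\in H_q$ the commutator $[a,b]=ab-ba$ lies in $H_{p+q-1}$, which is the source of $t\geq 1$. In part (iii) one fixes $t$ maximal with $[y_i,y_j]\in H_{m_i+m_j-t}$ for all $i,j$; using $[ab,c]=a[b,c]+[a,c]b$ and the fact that the polynomial algebra $\mathrm{gr}_{\mathcal{C}}H$ is a domain, a short induction on filtration degree upgrades this to $[a,b]\in H_{v(a)+v(b)-t}$ for all $a,b$, where $v$ denotes filtration degree. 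One then sets $\{\bar a,\bar b\}$ to be the image of $[a,b]$ in $H(v(a)+v(b)-t)$; this is well defined exactly as in part (i), antisymmetry is clear, and the Jacobi identity and Leibniz rule descend from the corresponding commutator identities. The bracket has degree $-t$ and is plainly zero precisely when every $[y_i,y_j]$ already lies in $H_{m_i+m_j-t-1}$, which by maximality of $t$ forces all commutators to vanish, i.e.\ forces $H$ to be commutative. Part (ii) is the case $t=1$, giving degree $-1$.

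The one step demanding genuine care, and the step I expect to be the main obstacle, is verifying that the coproduct remains a Poisson morphism on $\mathrm{gr}_{\mathcal{C}}H$. Here I would use that $\Delta$ is an algebra homomorphism, so it commutes with commutators: $\Delta([a,b])=[\Delta(a),\Delta(b)]$ in $H\otimes H$. The task is then to identify the associated-graded commutator bracket on $H\otimes H$ with the tensor-product Poisson bracket on $\mathrm{gr}_{\mathcal{C}}H\otimes\mathrm{gr}_{\mathcal{C}}H$ from Section \ref{Poisson}. The direct expansion $[a\otimes b,c\otimes d]=[a,c]\otimes bd+ca\otimes[b,d]$ has leading term $\{a,c\}\otimes bd+ac\otimes\{b,d\}$ after passing to the (commutative) associated graded, which is precisely that tensor bracket. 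Granting this identification, applying $\mathrm{gr}$ to $\Delta([a,b])=[\Delta(a),\Delta(b)]$ yields the Poisson morphism property and completes the argument; the passage from the familiar $t=1$ case to general $t$ is then purely a matter of tracking filtration degrees.
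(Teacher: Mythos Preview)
Your proposal is correct and follows exactly the route the paper indicates: part (i) via Lemma \ref{zzlem1.2} and Proposition \ref{zzlem1.5}(i), and parts (ii)--(iii) via Proposition \ref{zzlem1.5}(ii) with the commutator bracket inducing the Poisson structure, (ii) being the case $t=1$. The paper declares the details ``straightforward'' and leaves them to the reader, so your write-up is in fact more complete than the paper's own treatment; the only point you might tighten is the identification of the coradical filtration on $H\otimes H$ with the tensor filtration when checking the coproduct compatibility, but this is standard.
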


Similar terminology to that in Definition \ref{zzdef1.7} 
can be introduced in an obvious way for a
commutative $\mathbb{N}$-graded bialgebra $A$. 

We conclude this section by giving  an example of a Poisson Hopf
algebra to which Theorem \ref{zzthm0.1} applies. It should be 
observed that this example illustrates the fact that a 
Poisson Hopf polynomial algebra may be a graded Poisson 
Hopf algebra with respect to more than one grading, and 
that the hypotheses of the main theorem may apply in a 
proper subset of the possible cases.

\begin{example}
\label{zzex1.9}
Let $H$ be the connected Hopf algebra of GK-dimension 5 constructed 
in \cite[Theorem 5.6]{BGZ}. That is, 
$H = k\langle \hat{a}, \hat{b}, \hat{c}, \hat{z}, \hat{w}\rangle$ 
with relations given by setting all commutators of the generators 
equal to 0, except for $[\hat{a}, \hat{b}] = \hat{c}$ and 
$[\hat{z}, \hat{w}] = \frac{1}{3}\hat{c}^3.$ Here, $\hat{a}, \hat{b}, \hat{c}$ 
are primitive,
    $$ \Delta (\hat{z}) = 1 \otimes \hat{z} + \hat{z} \otimes 1 
		+ \hat{a} \otimes \hat{c} + \hat{c} \otimes \hat{a}.$$
and
    $$ \Delta (\hat{w}) = 1 \otimes \hat{w} + \hat{w} \otimes 1 
		+ \hat{b} \otimes \hat{c} + \hat{c} \otimes \hat{b}.$$
It is shown in \cite{BGZ} that $H$ is not isomorphic as an algebra 
to $U(L)$ for any Lie algebra $L$. It is easy to confirm that
    $$ H_1 = k + k\hat{a} + k\hat{b} + k\hat{c} \; \textit{ and } 
		\; H_2 = H_1 + H_1^2 + k\hat{z} + k\hat{w}, $$
so that $A := \mathrm{gr}_{\mathcal{C}}H = k[a,b,c,z,w]$, with 
$a,b$ and $c$ having degree 1, $z$ and $w$ degree 2. Thus, by 
Corollary \ref{zzcor1.8}, $A$ is a coradically graded Poisson 
Hopf algebra of degree $-1$, with
    $$ \{a,b\} = c, \qquad \qquad \{z,w\} = \frac{1}{3}c^3, $$
and all other Poisson brackets of the generators equal to 0.

Another way of understanding $A$ is the following.
Let $T$ be the subgroup of $SL(4,k)$ with $1s$ on the diagonal 
and $0s$ below the diagonal, and let $U = T/Z$, where $Z = Z(T)$, 
which is the subgroup of $T$ with all off-diagonal entries equal 
to 0 except for the $(1,4)$-entry. Then it is easy to check that 
$A \cong \mathcal{O}(U)$; indeed, with the obvious notation, one 
takes $a = X_{12}, b = X_{34}, c = X_{23}, z = X_{13} - S(X_{13}) 
= 2X_{13} - X_{12}X_{23}, w = X_{24} - S(X_{24}) = 2X_{24} - X_{34}X_{23}$.

Now we set
$$ \deg a =\deg b= 1; \qquad \deg c = 2; \qquad \deg z = \deg w = 3. $$
Then one checks that $A$ is a graded Poisson Hopf algebra of Poisson 
degree 0; but of course $A$ is no longer coradically graded. 
Nevertheless, the main theorem, Theorem \ref{zzthm0.1}, still 
applies, and we conclude that $A$ is unimodular. Another way of 
checking the unimodularity of $A$ is to use \eqref{E3.0.1} below.
\end{example}


\section{The enveloping algebra of a Poisson algebra}
\label{zzsec2}

\subsection{Definition of $\mathcal{U}(A)$}
\label{zzsec2.1} 
The proof of Theorem \ref{zzthm0.1} is carried out by passing 
to the enveloping algebra $\mathcal{U}(A)$ of the Poisson 
algebra $A$, whose definition we recall from \cite{Oh2}. Let 
$A$ be a Poisson algebra, and let $m_A=\{ m_a\mid a\in A\}$
and $h_A=\{h_a \mid a\in A\}$ be two copies of the vector
space $A$ endowed with two $\Bbbk$-linear isomorphisms
$m: A \to m_A:a\mapsto m_a$ and $h :A \to h_A : a\mapsto h_a$. 
Then the universal enveloping algebra
$\mathcal{U}(A)$ is an associative algebra over $\Bbbk$, with 
an identity $1$, generated by $m_A$ and $h_A$ with relations
\begin{align}
m_{xy} &= m_x m_y, \label{E2.0.1}\tag{E2.0.1}\\
h_{\{x,y\}} &= h_x h_y - h_y h_x, \label{E2.0.2}\tag{E2.0.2}\\
h_{xy} &= m_yh_x + m_xh_y, \label{E2.0.3}\tag{E2.0.3}\\
m_{\{x,y\}} &= h_xm_y - m_yh_x = [h_x,m_y], \label{E2.0.4}\tag{E2.0.4}\\
m_1 &= 1. \label{E2.0.5}\tag{E2.0.5}
\end{align}

Given a commutative $\Bbbk$-algebra $A$, an $A$-module and 
Lie algebra $L$, and an $A$-module and Lie algebra map $\rho$ 
from $L$ to $\mathrm{Der}_{\Bbbk}A$ satisfying a compatibility 
condition, Rinehart  \cite{Ri} defined in 1963 a certain 
associative algebra which he denoted $V(A,L)$ and which is 
nowadays called the 
\emph{the enveloping algebra of the Lie-Rinehart algebra} of 
$A$, $L$ and the \emph{anchor map} $\rho$. Huebschmann showed 
in 1990 \cite[Theorem 3.8]{Hu} that one can construct such an 
enveloping algebra starting from any Poisson algebra $(A,  \{-,-\})$, 
taking $L$ to be the $A$-module $\Omega_A$ of K{\"a}hler differentials 
of $A$, with $\rho(da) = \{a,-\}$ for $a \in A$. In fact it follows 
from earlier work of 
Weinstein \emph{et al} \cite{We1}, \cite{CDW},  that there is an 
algebra isomorphism 
$$ \mathcal{U}(A) \; \cong \; V(A,\Omega_A),$$ which is the 
identity on $A$; a detailed account of this isomorphism can 
be found as \cite[Proposition 5.7]{LWZ1}.

\subsection{Properties of $\mathcal{U}(A)$}
\label{zzsec2.2} 
Most of the following facts about $\mathcal{U}(A)$ which we need 
are already in the literature, or are easy consequences of known results.

\begin{proposition}
\label{zzpro2.1} 
Let $A$ be an affine Poisson $\Bbbk$-algebra, 
$A = \Bbbk \langle x_1, \ldots , , x_n \rangle$.  
Let $\mathcal{U}(A)$ be the Poisson enveloping algebra of $A$.
\begin{enumerate}
\item[(i)] 
$\mathcal{U}(A)$ is an affine $\Bbbk$-algebra, 
generated by $\{m_{x_i}, h_{x_i} : 1 \leq i,j \leq n\}$.
\item[(ii)] 
(Here we abuse notation slightly by simply writing $a$ for the image $m_a$ of 
$a \in A$  in $\mathcal{U}(A)$.) When $\mathcal{U}(A)$ is $\mathbb{Z}_{\geq 0}$-filtered 
by the filtration $\mathcal{F}$ obtained  by assigning 
$\mathrm{deg} a = 0$ for $a \in A$ and $\mathrm{deg} 
h_{x_i} = 1$ for $i = 1, \ldots , n$, $\mathrm{gr}_{\mathcal{F}}(A)$ 
is a commutative affine $\Bbbk$-algebra with a generating 
set of cardinality at most $2n$.
\item[(iii)] 
Suppose that $A$ is regular, with module of K{\"a}hler 
differentials $\Omega (A)$. Then
$$\mathrm{gr}_{\mathcal{F}}(A) \; \cong \; \mathrm{Sym}_A(\Omega_A),$$
the symmetric algebra of $\Omega_A$ over $A$.
\item[(iv)] Suppose that $A$ is regular of global dimension $t$. 
Then $\mathrm{GKdim}(\mathcal{U}(A)) = 2t.$
\end{enumerate}
\end{proposition}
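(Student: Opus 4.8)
The plan is to prove Proposition 2.1 parts (i)–(iv) in sequence, since each builds on the previous one. For part (i), I would argue directly from the defining relations (E2.0.1)–(E2.0.5). Since $A = \K\langle x_1,\dots,x_n\rangle$, relation (E2.0.1) shows every $m_a$ is a polynomial in $m_{x_1},\dots,m_{x_n}$; the content is that the $h_a$ are also generated by finitely many elements. The Leibniz-type relation (E2.0.3), $h_{xy} = m_y h_x + m_x h_y$, lets one reduce $h_a$ for a monomial $a$ to a sum of terms $m_{(\cdot)} h_{x_i}$, so $\U(A)$ is generated by $\{m_{x_i}, h_{x_i}\}$. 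This is a routine induction on the degree of the monomial.

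For part (ii), I would examine the associated graded of the filtration $\mathcal F$ in which $\deg a = 0$ and $\deg h_{x_i} = 1$. The key observation is that in $\gr_{\mathcal F}\U(A)$ the commutators become controlled: relation (E2.0.4), $m_{\{x,y\}} = [h_x, m_y]$, has left side in filtration degree $0$ while the right side is a commutator of degree-$\le 1$ elements; similarly (E2.0.2) gives $h_{\{x,y\}} = [h_x,h_y]$ with the left side in degree $1$ and the commutator a priori in degree $2$. In both cases the bracket drops filtration degree, so in $\gr_{\mathcal F}$ the images of the $m_a$ (degree $0$) commute with everything and the images of the $h_{x_i}$ (degree $1$) commute among themselves. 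Hence $\gr_{\mathcal F}\U(A)$ is commutative, and it is affine, generated by the $2n$ images of $m_{x_i}, h_{x_i}$; this gives the stated bound.

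For part (iii) I would identify $\gr_{\mathcal F}\U(A)$ with $\mathrm{Sym}_A(\Omega_A)$ under the regularity hypothesis, invoking the Lie–Rinehart description $\U(A)\cong V(A,\Omega_A)$ recalled in Section 2.1 (the isomorphism of \cite[Proposition 5.7]{LWZ1}). The enveloping algebra $V(A,\Omega_A)$ of a Lie–Rinehart algebra carries a canonical Poincaré–Birkhoff–Witt-type filtration whose associated graded is $\mathrm{Sym}_A(\Omega_A)$, \emph{provided} $\Omega_A$ is projective as an $A$-module — which holds exactly because $A$ is regular (smooth). I would match the PBW filtration on $V(A,\Omega_A)$ with the filtration $\mathcal F$, using that $h_{x_i}$ corresponds to the derivation attached to $dx_i \in \Omega_A$ and the $m_a$ to the copy of $A$. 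The main point is the PBW theorem for Lie–Rinehart algebras with projective anchor module; granting that, the identification $\gr_{\mathcal F}\U(A)\cong\mathrm{Sym}_A(\Omega_A)$ is immediate. For part (iv), suppose $A$ is regular of global dimension $t$; then $A$ is a polynomial ring of Krull dimension $t$ and $\Omega_A$ is projective of rank $t$, so $\mathrm{Sym}_A(\Omega_A)$ is a commutative affine algebra of Krull dimension $2t$. Since GK-dimension is preserved on passing to the associated graded of a finite filtration by finitely generated pieces, $\GKdim\U(A) = \GKdim\gr_{\mathcal F}\U(A) = \mathrm{Kdim}\,\mathrm{Sym}_A(\Omega_A) = 2t$.

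The main obstacle I anticipate is part (iii): making the PBW identification precise. One must verify that the filtration $\mathcal F$ defined concretely via the generators $m_{x_i}, h_{x_i}$ coincides with the intrinsic order filtration on $V(A,\Omega_A)$, and one must have the PBW theorem available in the form that uses projectivity of $\Omega_A$ over $A$. Once (iii) is in hand, (iv) is a short dimension count, and (i)–(ii) are elementary manipulations of the defining relations.
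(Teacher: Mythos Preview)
Your proposal is correct and follows the same route as the paper: parts (i) and (ii) directly from the relations \eqref{E2.0.1}--\eqref{E2.0.5}, part (iii) from Rinehart's PBW theorem \cite[Theorem 3.1]{Ri} using projectivity of $\Omega_A$ when $A$ is regular, and part (iv) by computing the Krull dimension of $\mathrm{Sym}_A(\Omega_A)$ and invoking preservation of GK-dimension under passage to the associated graded. One small correction in (iv): the assertion that $A$ regular of global dimension $t$ forces $A$ to be a polynomial ring is false in general and is not needed---the paper instead computes $\mathrm{Kdim}\,\mathrm{Sym}_A(\Omega_A)$ by localizing at maximal ideals $\mathfrak{m}'$ of $A$, where $\Omega_{A_{\mathfrak{m}'}}$ becomes free of rank $t' = \mathrm{height}(\mathfrak{m}') \le t$, giving height $2t'$ for the corresponding maximal ideal of the symmetric algebra and hence Krull dimension $2t$ overall.
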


\begin{proof}
(i), (ii): These are easily deduced from the defining relations 
for $\mathcal{U}(A)$, (E2.0.1)-(E2.0.5). 

(iii) This is the Rinehart's PBW theorem \cite[Theorem 3.1]{Ri}, 
since $\Omega_A$ is $A$-projective when $A$ is regular.

(iv) It follows from \cite[Sec. 1.4, Corollary]{MS} that
$$ \mathrm{GKdim}(A) \; = \; \mathrm{GKdim}(\mathrm{gr}_{\mathcal{F}}(A)),$$
since $\mathrm{gr}_{\mathcal{F}}(A)$ is an affine commutative 
$\Bbbk$-algebra by (3). However the GK-dimension of 
$\mathrm{gr}_{\mathcal{F}}(A)$ equals its Krull dimension. 
Let $\mathfrak{m}$ be a maximal ideal of  $\mathrm{gr}_{\mathcal{F}}(A)$, so that 
$\mathfrak{m}' := \mathfrak{m} \cap A$ is a maximal ideal of $A$. 
The localisation of  $\mathrm{gr}_{\mathcal{F}}(A)$  at 
$A \setminus \mathfrak{m}'$ is  
$ \mathrm{Sym}_{A_{\mathfrak{m}'}} (\Omega_{A_{\mathfrak{m}'}})$. 
This algebra is thus a polynomial algebra in $t'$ variables over 
the $t'-$dimensional regular local ring $A_{\mathfrak{m}'}$, 
where $t' = \mathrm{height}(\mathfrak{m}') \leq t$.  Thus all of 
its maximal ideals, and in particular 
$\mathfrak{m} \mathrm{Sym}_{A_{\mathfrak{m}'}} (\Omega_{A_{\mathfrak{m}'}})$, 
have height $2t'$. Since the maximum value of $t'$ is 
$t$, $\mathrm{Sym}_{A} (\Omega_{A})$  has Krull dimension $2t$ as required.
\end{proof}

\subsection{Gradings on $\mathcal{U}(A)$}
\label{zzsec2.3}
When $A$ is graded there are straightforward ways to extend 
the grading to $\mathcal{U}(A)$. The following result summarises 
what we will need.

\begin{proposition} 
\label{zzpro2.2} 
Let $A$ be an affine Poisson $\Bbbk$-algebra, 
$A = \Bbbk \langle x_1, \ldots , , x_n \rangle$.  Let 
$\mathcal{U}(A)$ be the Poisson enveloping algebra of $A$.
\begin{enumerate}
\item[(i)]
Suppose that $A$ is ${\mathbb Z}$-graded and $\{-,-\}$ is homogeneous of
degree $d$. Then $\mathcal{U}(A)$ is ${\mathbb Z}$-graded with
$\deg m_x=\deg x$ and $\deg h_x=\deg x+d$ for all homogeneous
elements $x$ in $A$.
\item[(ii)] 
If, in (i), $A$ is connected $\mathbb{N}$-graded with $d \geq 0$, 
then $\mathcal{U}(A)$ is also connected $\mathbb{N}$-graded.
\item[(iii)]
Suppose that $A$ is a  connected $\mathbb{N}$-graded algebra, generated in degree 1, and that
$\{-,-\}$ is homogeneous of degree $d\geq 0$. Then
$\mathcal{U}(A)$ is a connected $\mathbb{N}$-graded algebra, and is 
minimally generated by $m_{A_1}$ and $h_{A_1}$.
\end{enumerate}
\end{proposition}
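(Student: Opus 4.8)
Write $U := \mathcal{U}(A)$, and recall that $U$ is the quotient of the tensor algebra $T$ on the vector space $m_A \oplus h_A \cong A \oplus A$ by the two-sided ideal $I$ generated by the relations \eqref{E2.0.1}--\eqref{E2.0.5}. For (i) the plan is to grade $T$ by transporting the grading of $A$: declare $\deg m_x = \deg x$ and $\deg h_x = \deg x + d$ for homogeneous $x$, so that each copy of $A$ sits in $T$ as a graded subspace (the $h$-copy shifted by $d$). It then suffices to check that $I$ is a homogeneous ideal, and since $m,h$ are $\Bbbk$-linear and the relations are bilinear in $x,y$, this reduces to verifying that each relation is homogeneous for homogeneous $x,y$. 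This is a routine degree count using $\deg\{x,y\} = \deg x + \deg y + d$; for instance both sides of \eqref{E2.0.2} have degree $\deg x + \deg y + 2d$, and all three terms of \eqref{E2.0.3} have degree $\deg x + \deg y + d$, with \eqref{E2.0.1}, \eqref{E2.0.4} and \eqref{E2.0.5} handled identically. Hence $I$ is graded and $U$ inherits the asserted $\mathbb{Z}$-grading.

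For (ii), once $A$ is connected $\mathbb{N}$-graded and $d \ge 0$ every generator has non-negative degree, so $U$ is $\mathbb{N}$-graded. To see connectedness I would argue that $U(0)$ is spanned by products of degree-zero generators. The degree-zero $m$-generators are the $m_\lambda$ with $\lambda \in A(0) = \Bbbk$ and contribute only $\Bbbk 1$; a degree-zero $h$-generator would force $\deg x + d = 0$, hence $d = 0$ and $x \in \Bbbk$, and then $h_\lambda = \lambda h_1 = 0$, since putting $x=y=1$ in \eqref{E2.0.3} together with \eqref{E2.0.5} gives $h_1 = 2h_1$. Thus $U(0) = \Bbbk$.

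Part (iii) refines (ii): ``generated in degree $1$'' forces $A(0) = \Bbbk$, so $U$ is connected $\mathbb{N}$-graded. Generation by $m_{A(1)} \cup h_{A(1)}$ follows from the relations: \eqref{E2.0.1} shows $m_A$ lies in the subalgebra generated by $m_{A(1)}$, and the Leibniz relation \eqref{E2.0.3}, applied inductively to a product of degree-one elements and combined with the linearity of $h$, expresses every $h_w$ through $m_{A(1)}$ and $h_{A(1)}$. For minimality I would pass to the indecomposables $\mathfrak{m}/\mathfrak{m}^2$, where $\mathfrak{m} = \bigoplus_{i \ge 1} U(i)$, and show that bases of $m_{A(1)}$ and $h_{A(1)}$ together map to a basis there. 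Since $m_{A(1)}$ lies in degree $1$, the minimal positive degree, it is automatically indecomposable; when $d = 0$ the same holds for $h_{A(1)}$, which then also sits in degree $1$.

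The substantive case is $d \ge 1$, where $h_{A(1)}$ lies in degree $1+d$, and this is where I expect the real work. First a purely grading-theoretic observation: because the generators have degrees $1$ and $1+d$, no product of total degree $j < 1+d$ can involve an $h$, so $U(j) = m_{A(j)}$ for all such $j$ (using that $m$ is injective, $A$ being a subalgebra of $U$). Consequently the decomposable part $\sum_{0 < i < 1+d} U(i)\,U(1+d-i)$ in degree $1+d$ is contained in $m_A$, i.e.\ in the zeroth layer $\mathcal{F}_0$ of the filtration $\mathcal{F}$ of Proposition \ref{zzpro2.1}. It remains to separate $h_{A(1)}$ from $\mathcal{F}_0$, and this is the main obstacle: I would invoke Rinehart's theorem, Proposition \ref{zzpro2.1}(iii) (which is where regularity of $A$, as holds in the polynomial case relevant to Theorem \ref{zzthm0.1}, is used), to identify $\mathrm{gr}_{\mathcal{F}}U \cong \mathrm{Sym}_A(\Omega_A)$ and the symbol of $h_{x}$ in $\mathcal{F}_1/\mathcal{F}_0$ with the K\"ahler differential $dx$. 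The differentials of a basis of $A(1)$ are part of a free $A$-basis of $\Omega_A$, hence $\Bbbk$-independent modulo $\mathcal{F}_0$ and therefore modulo the decomposables. Combining the two cases shows $m_{A(1)}$ and $h_{A(1)}$ form a minimal generating space.
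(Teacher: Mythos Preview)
Your arguments for (i) and (ii) are essentially the paper's, only with more of the routine degree checks written out; nothing to add there.

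For (iii), though, there is a genuine gap. The statement does \emph{not} assume $A$ is regular --- it is asserted for an arbitrary affine connected graded Poisson algebra generated in degree $1$ --- yet your minimality argument in the case $d\ge 1$ invokes Rinehart's PBW theorem, Proposition~\ref{zzpro2.1}(iii), to separate $h_{A(1)}$ from $\mathcal F_0 = m_A$. As you yourself flag, that theorem needs $\Omega_A$ to be $A$-projective, i.e.\ $A$ regular. So what you have proved is only the special case of (iii) where $A$ is regular (which, admittedly, is the case used in Theorem~\ref{zzthm0.1}, but it is not what the proposition claims).

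The paper sidesteps PBW entirely with an elementary degree count on the \emph{relations} rather than on the generators. You already observed that $(\mathfrak m^2)_{1+d}$ lies in the pure-$m$ part of $T_{1+d}$; the missing step is to see that the relation ideal $I$ does too. Each relation generator from \eqref{E2.0.2}--\eqref{E2.0.4} that involves an $h$ has degree at least $d+2$ (taking $x,y$ of degree $\ge 1$), while \eqref{E2.0.1} and \eqref{E2.0.5} involve only $m$'s. Hence every element of $I$ in degree $1+d$ is built from pure-$m$ relation generators sandwiched between pure-$m$ factors, so $I_{1+d}$ lies in the pure-$m$ part of $T_{1+d}$. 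It follows that no nonzero $h_x$ with $x\in A(1)$ can become decomposable in $U$, and the same reasoning (applied when $d=0$ in degree $1$) gives the linear independence of $m_{A(1)}$ from $h_{A(1)}$ in $\mathfrak m/\mathfrak m^2$. This argument needs no smoothness hypothesis on $A$.
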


\begin{proof}
(i),(ii)  First note that $m: A\to m_A$ and
$h: A[d]\to h_A$ are
graded $\Bbbk$-linear maps. It is clear that $\mathcal{U}(A)$ is 
generated by
homogeneous elements in $m_A$ and $h_A$ since both $m$ and $h$ are
$\Bbbk$-linear. The relations of $\mathcal{U}(A)$ given in 
\eqref{E2.0.1}-\eqref{E2.0.5}
are homogeneous. Therefore $\mathcal{U}(A)$ is naturally 
${\mathbb Z}$-graded. Finally, (ii) is an immediate consequence of (i).

(iii) In this case we can assume that the generators $x_i$ are 
linearly independent and homogeneous of degree 1, so 
$\deg m_{x_i}=1$ and $\deg h_{x_i}=1+d\geq 1$ for all 
$i = 1, \ldots , n$. Therefore
$\mathcal{U}(A)$ is connected graded, by (ii). Since 
$\deg m_{x_i}=1$ for all $i$, $\{m_{x_i} : 1 \leq i \leq n \}$ is a linearly
independent subset of a minimal homogeneous generating set.
Let $\fm$ be the maximal graded ideal of $\mathcal{U}(A)$. It is 
clear that $\fm/\fm^2$ is spanned by 
$\{m_{x_i} : 1 \leq i \leq n \} \cup \{h_{x_i} : 1 \leq i \leq n \}$. 
Let $x \in A(1)$. Since
$\deg h_x=1+d$, if $h_x$ is in $\fm^2$, then it is generated by
elements of small degrees, namely by $m_{A(1)}$. However, any homogeneous
relation involving $h_x$ (see \eqref{E2.0.2}, \eqref{E2.0.3} and \eqref{E2.0.4})
has degree at least $d+2$. Therefore $h_x\in \fm^2$ does not follow
from any relations of $\mathcal{U}(A)$, a contradiction. Thus 
$\{h_{x_i} : 1 \leq i \leq n \}$ maps to a linearly independent subset 
of $\fm/\fm^2$. The same argument shows that 
$(m_{A(1)} + \fm^2/\fm^2) \cap( h_{A(1)} + \fm^2/\fm^2) = 0$. Hence 
$\mathcal{U}(A)$ is minimally generated by $m_{A(1)} \cup h_{A(1)}$.
\end{proof}

\subsection{Bialgebra structure on $\mathcal{U}(A)$}
\label{zzsec2.4}
We shall need the following results of \cite{Oh1} and \cite{LWZ1}. 
The proof of the claim in \cite{Oh1} that a Poisson Hopf structure 
on an algebra $A$ always induces a structure of Hopf algebra on 
$\mathcal{U}(A)$ appears not to be completely clear as regards the 
extension of the  antipode from $A$ to $\mathcal{U}(A)$, so we address 
that aspect separately, for the cases of concern to us, in what follows.

\begin{theorem}
\label{zzpro2.3} 
Let $A$ be a Poisson Hopf algebra.
\begin{enumerate}
\item[(i)] $\mathcal{U}(A)$ admits a bialgebra structure with $A$ as a sub-bialgebra.
\item[(ii)] The coradical of  $\mathcal{U}(A)$ is the coradical of $A$.
\item[(iii)] If $A$ is pointed, then $\mathcal{U}(A)$ is a Hopf algebra 
with sub-Hopf algebra $A$, with $G(\mathcal{U}(A)) = G(A)$. 
\item[(iv)] If $A$ is a connected Hopf algebra, then so is $\mathcal{U}(A)$.
\end{enumerate}
\end{theorem}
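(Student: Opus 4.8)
The plan is to put the coalgebra structure on $\U(A)$ explicitly on the generators, then identify the coradical, and only afterwards produce the antipode, which is the delicate point flagged above. Throughout I write $\Delta, \epsilon, S$ for the structure maps of $A$ and $\Delta_{\U}, \epsilon_{\U}, S_{\U}$ for those of $\U(A)$, and I use Sweedler notation $\Delta(a) = \sum a_{(1)} \ot a_{(2)}$.

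For (i) I would define $\Delta_{\U}$ and $\epsilon_{\U}$ on generators by
$$\Delta_{\U}(m_a) = \sum m_{a_{(1)}} \ot m_{a_{(2)}}, \qquad \epsilon_{\U}(m_a) = \epsilon(a),$$
$$\Delta_{\U}(h_a) = \sum \big( h_{a_{(1)}} \ot m_{a_{(2)}} + m_{a_{(1)}} \ot h_{a_{(2)}} \big), \qquad \epsilon_{\U}(h_a) = 0.$$
The first task is to verify that these prescriptions respect the defining relations (E2.0.1)--(E2.0.5), so that they extend to algebra homomorphisms $\U(A) \to \U(A) \ot \U(A)$ and $\U(A) \to \K$. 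The purely multiplicative relations (E2.0.1), (E2.0.3) and (E2.0.5) reduce, after expanding the Sweedler notation and using commutativity of $A$, to the statement that $\Delta$ is an algebra map; the two relations involving the bracket, (E2.0.2) and (E2.0.4), are exactly where the hypothesis that $\Delta$ is a Poisson morphism enters, through $\Delta(\{x,y\}) = \{\Delta(x),\Delta(y)\}$ computed with the tensor-product bracket of Section \ref{Poisson}. Coassociativity and counitality then follow from the corresponding properties of $A$ by checking on generators, and $m \colon A \to \U(A)$ is a bialgebra embedding by construction; this recovers the structure of \cite{Oh1, LWZ1}.

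For (ii), the inclusion $A_0 \subseteq \U(A)_0$ of coradicals is immediate, since $A$ is a subcoalgebra and a simple subcoalgebra of $A$ remains one in $\U(A)$. For the reverse inclusion the point is that the $h$-generators do not contribute to the coradical: by the formula above the coproduct of any element genuinely involving the $h$'s carries factors from $h_A$, so such elements lie in strictly higher terms of the coradical filtration than $\U(A)_0$. Combined with the PBW decomposition of Proposition \ref{zzpro2.1}(iii), which presents $\U(A)$ as a free $A$-module on ordered monomials in the $h$'s, this yields $\U(A)_0 \subseteq A$ and hence $\U(A)_0 = A_0$; for the details I would cite \cite{LWZ1}.

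The heart of the matter, and the only real obstacle, is the antipode in (iii). Rather than guess a closed formula for $S_{\U}(h_a)$ — which would then have to be checked against the noncommutative relations (E2.0.2)--(E2.0.4) — I would obtain it structurally. Assuming $A$ pointed, part (ii) shows $\U(A)$ has coradical $A_0 = \K[G(A)]$ and is therefore itself pointed; and since $A$ is a Hopf algebra its group-likes form a group, so $\K[G(A)]$ is a Hopf subalgebra of $\U(A)$. I then invoke the general principle that a bialgebra whose coradical is a Hopf subalgebra admits an antipode: the convolution inverse of $\id$ is constructed inductively along the coradical filtration, starting from the antipode on $\K[G(A)]$. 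This produces $S_{\U}$ on all of $\U(A)$, and by uniqueness of antipodes it restricts to $S$ on the sub-bialgebra $A$, so $A$ is a sub-Hopf algebra; moreover $G(\U(A)) = G(A)$ because every group-like lies in the coradical, which by (ii) equals that of $A$. Part (iv) is then the special case $G(A) = \{1\}$: connectedness of $A$ forces the coradical of $\U(A)$ to be $\K$ by (ii), and $\U(A)$ is a Hopf algebra by (iii), hence a connected one.
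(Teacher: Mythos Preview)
Your approach is essentially the same as the paper's. The paper proves (i) and (ii) purely by citation to \cite{Oh1} and \cite{LWZ1}, and your sketches recover what those references contain; for (iii) the paper invokes Radford's theorem \cite[Theorem~1]{Ra}---a pointed bialgebra whose group-like elements are all invertible is a Hopf algebra---which is precisely the structural principle you state in the form ``a bialgebra whose coradical is a Hopf subalgebra admits an antipode'', and (iv) is the special case $G(A)=\{1\}$ for both of you.

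One small caution on (ii): your sketch appeals to Proposition~\ref{zzpro2.1}(iii), but that PBW result is stated only for \emph{regular} $A$, a hypothesis not present in Theorem~\ref{zzpro2.3}. Since you ultimately defer to \cite{LWZ1} for the details this does no harm, but as written the PBW appeal does not cover the general statement.
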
 
\begin{proof} (i) See \cite[Theorem 10]{Oh1}.

(ii) This is \cite[Proposition 6.6]{LWZ1}.

(iii) Suppose that $A$ is pointed. Then (ii) shows that $\mathcal{U}(A)$ 
is also pointed, with $G(\mathcal{U}(A)) = G(A)$, so that every 
grouplike element of $\mathcal{U}(A)$ is invertible. It follows 
from \cite[Theorem 1]{Ra} that the bialgebra $\mathcal{U}(A)$ is a Hopf algebra.

(iv) This is a special case of (iii).
\end{proof}

\section{Proof of theorem \ref{zzthm0.1}}
\label{zzec3}

\subsection{The modular derivation.}
\label{zzsubsect3.1}
We refer to \cite{Do, LWW} for further details about 
several important invariants of a Poisson algebra. There is a 
geometric notion of
unimodular Poisson manifold introduced by Weinstein \cite{We2}. 
In algebraic terms, let $A$ be a smooth Poisson algebra with 
trivial canonical 
bundle - for instance in our case $A$ will be a polynomial 
$\Bbbk$-algebra in $n$ variables. A \emph{Poisson derivation} 
$d$ of $A$ is a derivation in the usual sense with the 
additional property that 
$$d(\{a,b\}) = \{d(a),b\} + \{a,d(b)\}$$ for all $a,b \in A$. 
Then the {\it modular derivation} of $A$ (with respect to a 
given volume form) is a certain Poisson derivation $\delta$, 
defined in \cite[Definition 2.3]{LWW}. Now the 
{\it modular class} of $A$ is the class of $\delta$ in the space of Poisson derivations 
modulo the subgroup of \emph{log-Hamiltonian} derivations, see both
\cite[p. 208]{Do} and \cite[Section 2.2]{LWW}.
If the modular class of $A$ is trivial, then $A$ is called {\it unimodular}.
If $A$ is the polynomial ring (or more generally, if the only 
invertible elements in $A$ are scalars),
then there are no non-zero log-Hamiltonian derivations. In this case, $A$
is unimodular precisely when the modular derivation is zero.

When $A$ is the polynomial ring $\Bbbk[x_1,\cdots,x_n]$ (and 
the volume form can be taken 
to be $\eta:= d x_1 \wedge dx_2 \wedge \cdots \wedge dx_n$),
the modular derivation $\delta$ of $A$ is given in \cite[Lemma 2.4]{LWW}:
namely,
\begin{equation}
\label{E3.0.1}\tag{E3.0.1}
\delta(f)(=\delta_{\eta}(f))=\sum_{j=1}^n \frac{\partial\{ f,x_j\}}{\partial x_j}
\end{equation}
for all $f\in A$.

We refer to \cite{RRZ} for some basic definitions concerning Calabi-Yau algebras
(or CY algebras for short), skew Calabi-Yau algebras and the Nakayama
automorphism. There is a close connection between modular
derivations and Nakayama automorphisms via deformation
theory, see \cite[Theorem 2]{Do}. A further such result, in this case
connecting  the modular derivation of a Poisson algebra with the
Nakayama automorphism of its Poisson enveloping algebra, is the 
following. We shall apply the result below in the case where $A$ 
is a polynomial algebra. In its statement, one has to interpret 
the expression ``the ... automorphism ...  of $\mathcal{U}(A)$ 
... is  ... $2\delta$'' in the following manner. Recall that 
$\mathcal{U}(A)$ is generated by $\{m_a, h_a : a \in A\}$. So, 
if $\tau$ is a Poisson derivation of $A$, we define the 
corresponding map on $\mathcal{U}(A)$ by $\tau (m_a) = m_a$ 
and $\tau(h_a) = h_a + m_{\tau (a)}$. For details, see 
\cite[Lemma 2.2]{LWZ2}.

\begin{proposition}
\cite[Corollary 5.6, Proposition 1.12, Theorem 5.8 and Remark 5.9]{LWZ2}
\label{zzlem3.1}
Let $A$ be a CY Poisson algebra; that is, by definition, $A$ is an affine 
smooth Poisson algebra of finite global dimension $n$, with trivial 
canonical bundle. Then $\mathcal{U}(A)$ is skew
CY of dimension $2n$. Moreover, the Nakayama automorphism of $\mathcal{U}(A)$
is given by  $2\delta$, where $\delta$ is the modular derivation
of $A$. As a consequence, if $A$ is a polynomial ring, then $A$ is
unimodular if and only if  the modular derivation is zero, and  if and only
if $\mathcal{U}(A)$ is CY.
\end{proposition}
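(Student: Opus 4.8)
The plan is to establish the three assertions in turn: the skew Calabi--Yau property and the value of the Nakayama automorphism by passing to the associated graded algebra of $\U(A)$, and the final equivalences as a formal consequence.

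For the skew CY property, I would first record that $\U(A)$ is homologically smooth of global dimension $2n$. The key input is the Rinehart PBW isomorphism of Proposition \ref{zzpro2.1}(iii), $\gr_{\mathcal F}\U(A)\cong \mathrm{Sym}_A(\Omega_A)$, for the filtration $\mathcal F$ of Proposition \ref{zzpro2.1}(ii). Since $A$ is smooth of global dimension $n$ with $\Omega_A$ projective of rank $n$ and $\wedge^n_A\Omega_A\cong A$ (trivial canonical bundle), the commutative algebra $\mathrm{Sym}_A(\Omega_A)$ is smooth of dimension $2n$ with trivial canonical bundle, hence Calabi--Yau of dimension $2n$ with trivial Nakayama automorphism. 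I would then transfer both homological smoothness and the skew CY property from $\gr_{\mathcal F}\U(A)$ to the filtered algebra $\U(A)$, using the standard ascent principle along a positive, exhaustive filtration with Noetherian CY associated graded: lift a finite projective bimodule resolution via the Rees construction and run the spectral sequence computing $\Ext^{\bullet}_{\U(A)^e}(\U(A),\U(A)^e)$ from the corresponding graded $\Ext$ groups. This shows $\U(A)$ is skew CY of dimension $2n$.

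The substantive point, and the \emph{main obstacle}, is to identify the Nakayama automorphism $\mu$ as $2\delta$. The previous step shows only that $\mu-\id$ strictly lowers $\mathcal F$-degree, since $\gr_{\mathcal F}\U(A)$ has trivial Nakayama automorphism; thus the leading term of $\mu$ is a derivation. To pin it down I would use the identification $\U(A)\cong V(A,\Omega_A)$ of Section \ref{zzsec2.1}, viewing $\U(A)$ as the enveloping algebra of the cotangent Lie--Rinehart algebra of the Poisson structure, with anchor $da\mapsto\{a,-\}$, and invoke the Van den Bergh/Poincar\'e duality formula for the dualizing bimodule of $V(A,L)$ with $L$ projective of rank $r$: the twist is governed by $\wedge^r_A L\ot_A\wedge^n_A\Omega_A$ together with the modular class of the Lie algebroid $(A,L)$. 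Specialising $L=\Omega_A$, the canonical bundle $\wedge^n_A\Omega_A$ contributes one copy of the modular derivation $\delta$, while the modular class of the cotangent algebroid --- classically the modular vector field of the Poisson bracket --- contributes a second copy; adding the two yields $\mu=2\delta$ under the convention of Section \ref{zzsubsect3.1} that a Poisson derivation $\tau$ acts by $m_a\mapsto m_a$, $h_a\mapsto h_a+m_{\tau(a)}$. The delicate bookkeeping that each of the two twists equals exactly $\delta$, rather than some other multiple or a related Poisson cocycle, is the crux of the proof.

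For the final equivalences, take $A=\Bbbk[x_1,\dots,x_n]$, which is a CY Poisson algebra, so the above gives $\mu=2\delta$. Because $\gr_{\mathcal F}\U(A)$ is a domain and $\mathcal F$ is a positive exhaustive filtration, every unit of $\U(A)$ lies in $\mathcal F_0=A$ and is therefore a scalar; hence the only inner automorphism of $\U(A)$ is the identity. Since a skew CY algebra is CY exactly when its Nakayama automorphism is inner, $\U(A)$ is CY if and only if $\mu=\id$. By the defining formula $h_a\mapsto h_a+m_{2\delta(a)}$ and the injectivity of $m$, this holds if and only if $2\delta=0$, equivalently $\delta=0$ since $\char\Bbbk=0$. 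Finally, as recalled in Section \ref{zzsubsect3.1}, the polynomial ring has no nonzero log-Hamiltonian derivations, so $A$ is unimodular precisely when $\delta=0$. Chaining these statements gives that $A$ is unimodular if and only if $\delta=0$ if and only if $\U(A)$ is CY, as required.
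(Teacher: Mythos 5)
You should first be aware that the paper contains no proof of this proposition at all: it is imported wholesale, with the bracketed citation, from L\"u--Wang--Zhuang \cite{LWZ2}, and the surrounding text of Section 3 uses it as a black box. So the comparison is really with the proof in \cite{LWZ2}, and your sketch reconstructs essentially that argument: the skew Calabi--Yau property of $\U(A)$ of dimension $2n$ is obtained there, as in your first step, from the Rinehart PBW filtration (Proposition \ref{zzpro2.1}(iii)), the fact that $\gr_{\mathcal F}\U(A)\cong \mathrm{Sym}_A(\Omega_A)$ is smooth with trivial canonical bundle (indeed the canonical module of $\mathrm{Sym}_A(\Omega_A)$ is the extension of $(\wedge^n_A\Omega_A)^{\ot 2}$, which is already where the factor $2$ is born), and a filtered-to-graded ascent; and the Nakayama automorphism is identified there exactly via the Lie--Rinehart realization $\U(A)\cong V(A,\Omega_A)$ and the duality/rigid-dualizing-complex formula for $V(A,L)$ with $L$ projective of constant rank, going back to Huebschmann's duality for Lie--Rinehart algebras (J. reine angew. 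Math. 510 (1999)) and Chemla's computation of dualizing complexes of such enveloping algebras. Your closing step --- units of $\U(A)$ live in $\mathcal{F}_0=A$ because $\gr_{\mathcal F}\U(A)$ is an $\mathbb{N}$-graded domain, hence are scalars, hence inner automorphisms are trivial, hence CY $\Leftrightarrow$ $\mu=\id$ $\Leftrightarrow$ $2\delta=0$ $\Leftrightarrow$ $\delta=0$, which for a polynomial ring is unimodularity since there are no nonzero log-Hamiltonian derivations --- is precisely the content of \cite[Proposition 1.12 and Remark 5.9]{LWZ2} and of Section \ref{zzsubsect3.1} of this paper, and is correct as you state it.

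The one point at which your write-up is not yet a proof is the one you candidly flag yourself: the identification of the two twist contributions, namely that the algebroid modular class of the cotangent Lie--Rinehart algebra $(A,\Omega_A)$ and the canonical-bundle twist $\wedge^n_A\Omega_A$ each contribute exactly one copy of $\delta$, with the stated sign and normalization conventions ($\tau(m_a)=m_a$, $\tau(h_a)=h_a+m_{\tau(a)}$). This is true --- it is the classical fact, due to Evens--Lu--Weinstein and Kosmann-Schwarzbach, that the modular class of the cotangent Lie algebroid of a Poisson manifold is \emph{twice} the Poisson modular class, with the splitting into two equal summands exactly as you describe --- but in your text it is asserted rather than verified, and it is genuinely the crux: an error of sign or of a factor here would break the equivalence ``$\U(A)$ CY $\Leftrightarrow$ $A$ unimodular'' on which Theorem \ref{zzthm0.1} rests. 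A self-contained version of your argument must either carry out this bookkeeping (e.g.\ by computing $\Ext^{2n}_{\U(A)^e}(\U(A),\U(A)^e)$ on the PBW-associated graded and tracing the right $\U(A)$-action on the one-dimensional top $\Ext$), or cite it precisely, which is in effect what the paper does by quoting \cite[Theorem 5.8 and Remark 5.9]{LWZ2}. With that caveat, your proposal is a faithful, correctly organized reconstruction of the cited proof rather than a new route.
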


\subsection{Proof of Theorem \ref{zzthm0.1}.} 
Theorem \ref{zzthm0.1} follows easily from the above proposition 
and \cite[Theorem 0.2]{BGZ}.

\begin{proof} Since $A$ is a commutative affine connected graded 
algebra of finite global dimension, it is a polynomial algebra, 
by \cite[Theorem 6.2]{Be}. Its coalgebra structure is 
thus also connected, by Theorem \ref{zzthm1.6}.
By the hypothesis on the grading of the Poisson bracket and by 
Proposition \ref{zzpro2.2}(ii), $\mathcal{U}(A)$ is a 
connected $\mathbb{N}$-graded algebra, and 
has finite GKdimension by Proposition \ref{zzpro2.1}(iv).
By Theorem \ref{zzpro2.3}(iv), $\mathcal{U}(A)$ is a 
connected Hopf algebra which is moreover connected 
graded as an algebra. Applying \cite[Theorem 0.2]{BGZ}, 
$\mathcal{U}(A)$ is Calabi-Yau. It follows
from Lemma \ref{zzlem3.1} that the modular derivation 
$\delta$ of $A$ is zero - that is, $A$ is unimodular.
\end{proof}

Corollary \ref{zzcor0.2} is an immediate consequence 
of Theorem \ref{zzthm0.1} and 
\cite[Theorem 3.5 and Remark 3.6]{LWW}
(or \cite[Theorem 4.3]{LWZ2}).

\section{Examples and counterexamples}
\label{zzsec4}

Theorem \ref{zzthm0.1} does not remain true for Poisson bialgebras
as the following example demonstrates.

\begin{example}
\label{zzex4.1}
Let $A$ be the polynomial algebra $\mathbb{C}[x,y]$, let $i$ be a 
non-negative integer, and consider the bialgebra structure on $A$ given by
$$ \Delta (y) = y \otimes y, \quad \Delta (x) = x \otimes 1 + y^i \otimes x,$$
with $\epsilon (x) = 0$ and $\epsilon (y) = 1$.Then, setting $\{x,y\} = xy$, 
it is straightforward to check that $A$ is a Poisson bialgebra. Moreover, 
$A$ is a connected graded algebra, with the Poisson bracket homogeneous of 
degree 0. However, $A$ is \emph{not} unimodular - one calculates from 
\eqref{E3.0.1} that the modular derivation $\delta$ of $A$ is given by 
$\delta (x) = x$ and $\delta (y) = -y$.
\end{example}

\begin{example} \cite[Example 2.8(2)]{LWW}
\label{zzex4.2}
$\textbf{The Kostant-Souriau Poisson bracket.}$  
The familiar Poisson bracket induced by the bracket 
of a Lie algebra $\mathfrak{g} = \sum_{i=1}^n \Bbbk x_i$ 
on its symmetric algebra $S(\mathfrak{g})$ shows that 
the hypothesis in Theorem \ref{zzthm0.1} that the 
Poisson bracket is homogeneous of degree $d \geq 0$ is necessary.

For, with its polynomial generators $x_i$ all assigned 
degree 1, $S(\mathfrak{g})$ is a 
coradically graded Poisson Hopf algebra of degree $d= -1$.
Using \eqref{E3.0.1} one easily calculates that the modular 
derivation is given by 
$$ \delta (x_i) = \mathrm{tr}(\mathrm{ad}(x_i))    \quad 1 \leq i \leq n,$$
where $\mathrm{ad}$ denotes the adjoint representation of $\mathfrak{g}$.
\end{example}

\begin{example}
\label{zzex4.3}
$\textbf{Connected Hopf algebras of small GK-dimension.}$ 
The connected Hopf $\Bbbk$-algebras of GK-dimensions 3 and 4 are 
determined in \cite{Zh} and \cite{WZZ} respectively. Applied to 
these algebras, the recipe of Corollary \ref{zzcor1.8}(ii) and 
(iii) yields infinite families of coradically graded Poisson 
Hopf algebra structures  on the polynomial $\Bbbk$-algebras in 
3 and 4 variables. Two of these families, manufactured respectively 
from \cite[Section 7, Example 2]{Zh} and \cite[Example 4.4]{WZZ}, 
are given as \cite[Examples 3.2, 3.3]{LWZ1}. Here are brief details 
of the 3-dimensional examples.

(a) Let  $\lambda, \mu \in k$ and $\alpha \in \{0,1\}$, and let 
$A :=A(\lambda, \mu, \alpha)$ be the family of Hopf $\Bbbk$-algebras 
of GK-dimension 3 defined at \cite[Section 7, Example 2]{Zh}. So 
$A = k\langle X,Y,Z \rangle$, with
$$ [X,Y] = 0, \quad [Z,X] = \lambda X + \alpha Y, \quad [Z, Y] = \mu Y. $$
The space of primitive elements is $kX \oplus kY$, and 
$\Delta (Z) = Z \otimes 1 + 1 \otimes Z + X \otimes Y,$ so that 
$Z \in A_2$. Thus, applying Corollary \ref{zzcor1.8}(iii) and 
using the obvious notation,
$$ \mathrm{gr_{\mathcal{C}}}A(\lambda, \mu, \alpha) = k[x,y,z]; 
\quad \mathrm{deg} x = \mathrm{deg} y = 1; \quad \mathrm{deg} z = 2. $$
Therefore $\mathrm{gr}_{\mathcal{C}}A(\lambda, \mu, \alpha) = \mathcal{O}(U)$ 
is a coradically graded Poisson Hopf algebra of Poisson degree $-2$, with 
$U$ the 3-dimensional Heisenberg group and
$$ \{x,y\} = 0, \quad \{z,x\} = \lambda x + \alpha y, \quad \{z,y\} = \mu y. $$
Using \eqref{E3.0.1}, the modular derivation of the Poisson algebra 
$\mathcal{O}(U)$ is determined by
$$\delta(x)=0,\; \delta(y)=0, \; \delta(z)=\lambda+\mu.$$
By \cite[Proposition 7.9]{Zh}, a complete set of isomorphism classes 
of the Hopf algebras $A(\lambda, \mu, \alpha)$ is given by
\begin{equation}\label{E4.3.1}
\tag{E4.3.1} 
\{(1,0,0),(0,0,0), (0,0,1), (1,1,1), (1,\mu^{\pm 1},0) : \mu \in k^* \}.
\end{equation}
It is not hard to deduce from this result that the same list of parameter 
values (\ref{E4.3.1}) gives the complete list of isomorphism classes of 
the Poisson Hopf algebras $\mathrm{gr}_{\mathcal{C}}A(\lambda, \mu, \alpha)$. 
We sketch an argument. It is enough to show that distinct parameter triples 
from \eqref{E4.3.1} yield distinct Poisson Hopf algebras. Suppose that $\theta$ 
is an isomorphism of Poisson Hopf algebras between two such algebras. Since 
$\theta$ preserves the counits, that is $\theta (\mathfrak{m}) = \mathfrak{n}$ 
say, it induces an isomorphism of Lie algebras 
$(\mathfrak{m}/\mathfrak{m}^2, \{-,-\}) \cong (\mathfrak{n}/\mathfrak{n}^2, \{-,-\})$. 
Moreover, $\theta$ preserves the coalgebra structure, and hence  preserves 
the structures of Lie bialgebras (see \cite[Section 6]{LWZ1}) on 
$(\mathfrak{m}/\mathfrak{m}^2, \{-,-\})$ and $(\mathfrak{n}/\mathfrak{n}^2, \{-,-\})$. 
From this the desired conclusion easily follows.

(b) Let $B(\lambda)$ be the family of Hopf $k$-algebras of GK-dimension 3 
defined at \cite[Section 7, Example 3]{Zh}. So $B(\lambda) = k\langle X,Y,Z \rangle$, 
with $\lambda \in k$ and
$$[X,Y] = Y; \quad [Z,X] = -Z + \lambda Y; \quad [Z,Y] = \frac{1}{2}Y^2. $$
The space of primitive elements is $kX \oplus kY$ and 
$\Delta (Z) = Z \otimes 1 + 1 \otimes Z + X \otimes Y.$ Thus, exactly as 
with (a) we get a coradically graded Poisson Hopf algebra
$$ \mathrm{gr}_{\mathcal{C}}B(\lambda) = k[x,y,z]; 
\quad \mathrm{deg} x = \mathrm{deg} y = 1; \quad \mathrm{deg} z = 2. $$
This time, however, the Poisson degree $d$ is $-1$. We see that  
$\mathrm{gr}_{\mathcal{C}}B(\lambda)$ is a Poisson Hopf algebra with 
the underlying Poisson unipotent group again being the 3-dimensional 
Heisenberg group, with
$$ \{x,y\} = y, \quad \{z,x\} = -z, \quad \{z,y\} = \frac{1}{2} y^2. $$
Using \eqref{E3.0.1}, the modular derivation of this Poisson algebra is
determined by
$$\delta(x)=2,\; \delta(y)=0, \; \delta(z)=y.$$
From \cite[Proposition 7.10]{Zh}, $B(\lambda) \cong B(\mu)$ as Hopf 
algebras if and only if $\lambda = \mu$, whereas it is evident that 
one obtains the \emph{same} Poisson Hopf algebra $\mathrm{gr}_{\mathcal{C}}B(\lambda)$ 
for every value of $\lambda \in k$.
\end{example}

\vspace{0.5cm}


\begin{thebibliography}{50}


\bibitem[AS]{AS} 
N. Andruskiewitsch and H.-J. Schneider,
``Pointed Hopf Algebras'', 
pages 1--68 of \emph{New Directions in Hopf Algebras} MSRI Publications 
{\bf 43}, Cambridge University Press, 2002.

\bibitem[BYZ]{BYZ}
Y.-H. Bao, Y. Ye and J.J. Zhang, 
Restricted Poisson algebras, 
\emph{Pacific J. Math.} {\bf 289} (2017), no. 1, 1--34. 

\bibitem[Be]{Be}
D.J. Benson,
\emph{Polynomial Invariants of Finite Groups},
London Math. Soc. Lecture Notes $(\textbf{190})$, 
cambridge University Press, 1993.


\bibitem[BK]{BK} R. Bezrukavnikov and D. Kaledin, 
Fedosov quantization in positive characteristic, 
\emph{J. Amer. Math. Soc.} {\bf 21}  (2008), 409--438.

\bibitem[BG]{BG} 
K.A. Brown and P. Gilmartin, 
Quantum homogeneous spaces of connected Hopf algebras, 
\emph{J. Algebra} {\bf 454} (2016), 400-432.



\bibitem[BGZ]{BGZ}
K.A. Brown, P. Gilmartin and J.J. Zhang, Connected (graded) Hopf algebras,
preprint (2016), arXiv:1601.06687.

\bibitem[CDW]{CDW}
A. Coste, P. Dazord, A. Weinstein,
Groupoides symplectiques,
Univ. Claude-Bernard, Lyon, 1987, pp 1-62, i-ii.


\bibitem[Co]{Co} 
Y. Cornulier, 
Gradings on Lie algebras, systolic growth, and coHopfian properties of nilpotent groups, 
\emph{Bull. Soc. Math. France} {\bf 144}, 693-744.


\bibitem[Do]{Do}
V.A. Dolgushev,
The Van den Bergh duality and the modular symmetry of a Poisson variety,
\emph{Selecta Math.} (N.S.) {\bf 14} (2009) 199--228.

\bibitem[Dr1]{Dr1}
V.G. Drinfeld,
Hopf algebras and the quantum Yang–Baxter equation, 
Dokl. Akad. Nauk SSSR {\bf 283}:5 (1985), 1060--1064. 
In Russian; translated in Soviet Math. Dokl. {\bf 32}:1 (1985),
254--258.


\bibitem[Dr2]{Dr2}
V.G. Drinfel'd,
Quantum groups,
\emph{Proc. International Congress of Mathematicians}, Berkeley, Ca, USA, 1986.










\bibitem[Hu]{Hu} 
J. Huebschmann, 
Poisson cohomology and quantization, 
\emph{J. Reine Angew. Math.} {\bf 408} (1990), 57--113.


\bibitem[KS]{KS}
L.I. Korogodski and Y.S. Soibelman,
\emph{Algebras of Functions on Quantum Groups: Part 1},
Mathematical Surveys and Monographs {\bf 56}, Amer. Math. Soc. 1998.

\bibitem[KL]{KL}
G. Krause and T.H. Lenagan,
\emph{Growth of Algebras and Gel'and-Kirillov Dimension},
Revised Edition, Graduate Texts in Mathematics {\bf 22},
Amer. Math. Soc. 2000.

\bibitem[LW]{LW}
Q. Lou and Q.-S. Wu,
Co-Poisson structures on polynomial Hopf algebras,
\emph{Sci. China Math.} (accepted for publication), 
preprint (2016), arXiv 1601.042969.

\bibitem[LWZ1]{LWZ1}
J. L\"u, X. Wang and G. Zhuang,
Universal enveloping algebras of Poisson Hopf algebras,
\emph{J. Algebra}, {\bf  426}  (2015), 92--136.

\bibitem[LWZ2]{LWZ2}
J. L\"u, X. Wang and G. Zhuang,
Homological unimodularity and Calabi-Yau condition for Poisson algebras,
\emph{Lett Math Phys}, DOI 10.1007/s11005-017-0967-6,
preprint (2016),  arXiv:1608.00172.




\bibitem[LWW]{LWW}
J. Luo, S.-Q. Wang and Q.-S. Wu,
Twisted Poincar$\acute{\mathrm{e}}$ duality between Poisson homology and Poisson cohomology,
\emph{J. Algebra}, {\bf  442} (2015), 484--505.



\bibitem[La]{La}
M. Lazard, 
Sur la nilpotence de certains groupes algebriques, 
\emph{C. R. Acad. Sci.} Ser. 1 Math.
{\bf 41} (1955), 1687 – 1689.

\bibitem[LY]{LY}
J Levitt and M. Yakimov,
Quantized Weyl algebras at roots of unity,
preprint, (2016), arXiv:1606.02121.

\bibitem[MS]{MS} 
J.C. McConnell and J.T.  Stafford, 
\emph{Gel′fand-Kirillov dimension and associated graded modules}, 
J. Algebra  {\bf 125}  (1989),  no. 1, 197--214. 



\bibitem[Mo]{Mo}
S. Montgomery,
\emph{Hopf Algebras and their Actions on Rings},
CBMS Regional Series in Mathematics {\bf 82}, Providence, RI, 1993.


\bibitem[NTY]{NTY}
B. Nguyen, K. Trampel, and M. Yakimov, 
Noncommutative discriminants via Poisson primes,
preprint, (2016), arXiv:1606.02121.

\bibitem[Oh1]{Oh1}
S.-Q. Oh,
Hopf structure for Poisson enveloping algebras,
\emph{Beitr{\"a}ge Algebra Geom.} {\bf 44} (2) (2003) 567--574.

\bibitem[Oh2]{Oh2}
S.-Q. Oh,
Poisson enveloping algebras,
\emph{Comm. Algebra} {\bf 27} (1999), 2181--2186.

\bibitem[Ra]{Ra}
D.E. Radford,
On bialgebras which are simple Hopf modules,
\emph{Proc. Amer. Math. Soc.} {\bf 80} (1980), 563--568.



\bibitem[Ri]{Ri}
G.S. Rinehart,
Differential forms on general commutative algebras,
\emph{Trans. Amer. Math. Soc.} {\bf 108} (1963), 195--222.


\bibitem[RRZ]{RRZ}
M. Reyes, D. Rogalski, and J. J. Zhang,
Skew Calabi-Yau algebras and homological identities,
\emph{Adv. Math.} \textbf{264} (2014), 308--354.




\bibitem[WWY]{WWY}
C. Walton, X.-T. Wang and M. Yakimov,
The Poisson geometry of the 3-dimensional Sklyanin algebras,
preprint, (2017), arXiv:1704.04975.


\bibitem[WZZ]{WZZ}
D.-G. Wang, J.J. Zhang, G. Zhuang,
Classification of connected Hopf algebras of Gelfand-Kirillov dimension four,
\emph{Trans. Amer. Math. Soc.} {\bf367} (2015), 5597-5632.

\bibitem[We1]{We1}
A. Weinstein,
Symplectic groupoids and Poisson manifolds,
\emph{Bull. Amer. Math. Soc.} {\bf 16} (1987), 101--104.

\bibitem[We2]{We2}
A. Weinstein,
The modular automorphism group of a Poisson manifold,
\emph{J. Geom. Phys.}   {\bf 23} (1997), 379--394.


\bibitem[Zh]{Zh} 
G.-B. Zhuang, 
Properties of pointed and connected Hopf algebas of 
finite Gelfand-Kirillov dimension, 
\emph{J. London Math. Soc.} (2) {\bf 87} (2013), 877-898.

\end{thebibliography}
\end{document}